\documentclass[11pt]{article}

\usepackage[a4paper,margin=1in]{geometry}
\usepackage[T1]{fontenc}
\usepackage[utf8]{inputenc}
\usepackage{lmodern}
\usepackage{xcolor}
\newcommand{\newrev}[1]{{\color{black}#1}}
\usepackage{microtype}
\usepackage{amsmath,amssymb,amsthm,mathtools}
\usepackage{enumitem}
\usepackage[numbers]{natbib}
\usepackage[hidelinks]{hyperref}

\newtheorem{theorem}{Theorem}[section]
\newtheorem{proposition}[theorem]{Proposition}
\newtheorem{corollary}[theorem]{Corollary}
\newtheorem{lemma}[theorem]{Lemma}
\newtheorem{definition}[theorem]{Definition}
\newtheorem{remark}[theorem]{Remark}
\newtheorem{example}[theorem]{Example}

\newcommand{\R}{\mathbb R}
\newcommand{\B}{\mathbb B}
\newcommand{\K}{\mathbb K}
\newcommand{\T}{\mathbb T}
\newcommand{\im}{\operatorname{im}}
\newcommand{\gr}{\operatorname{gr}}
\newcommand{\supp}{\operatorname{supp}}
\newcommand{\qnum}[1]{\left[#1\right]_q}
\newcommand{\A}{\mathcal A}
\newcommand{\Ycal}{\mathcal Y}
\newcommand{\Xcal}{\mathcal X}
\newcommand{\Hcal}{\mathcal H}
\newcommand{\Gcal}{\mathcal G}
\newcommand{\Cop}{\mathcal C}
\newcommand{\Kop}{\mathcal K}
\newcommand{\dY}{\partial^{Y,\mathrm R}_{x,q}}
\newcommand{\dglob}{\partial^{\mathrm R}_{x,q}}
\newcommand{\dscal}{\partial^{\mathrm{sc},Y}_{x,q}}
\newcommand{\dclass}{\partial^{\mathrm R}_x}
\newcommand{\RB}{R_{\B}}

\title{Intrinsic \newrev{Right} \(q\)-Radial Vector Derivatives and Localized Fischer Decompositions on Radial Algebras}

\author{\small Diana Barseghyan (Schneiderov\'a)$^{(1)}$, Juan Bory-Reyes$^{(2)}$, Baruch Schneider$^{(1)}$,
Yifan Zhang$^{(1),(3),(4)}$}
\date{\footnotesize
$^{(1)}$ Department of Mathematics, University of Ostrava,
30. dubna 22, 70103 Ostrava, Czech Republic.\\
E-mail: diana.schneiderova@osu.cz; baruch.schneider@osu.cz; yifan.zhang@osu.cz\\
$^{(2)}$ ESIME-Zacatenco, Instituto Polit\'ecnico Nacional, CDMX 07738, M\'exico.\\
E-mail: juanboryreyes@yahoo.com\\
$^{(3)}$ Department of Algebra, Charles University,
Sokolovsk\'a 83, 18675 Prague, Czech Republic.\\
$^{(4)}$ Department of Applied Mathematics, VSB--Technical University of Ostrava,
17. listopadu 15, 70800 Ostrava, Czech Republic.}

\begin{document}
\maketitle

\begin{abstract}
We define intrinsic right \(q\)-radial vector derivatives on radial algebras of abstract vector variables. For a finite parameter set \(Y\), a scalar Jackson calculus on \(x^2\) and the mixed anticommutators \(\{x,y_i\}\) is combined with the exterior decomposition relative to \(x\). Relabelling covariance and compatibility under inclusions of parameter sets give a direct-limit operator on arbitrary radial algebras. \newrev{Under the dimension specialization \(Q=q^M\), its classical limit agrees in every \(M\)-dimensional Clifford realization with the standard right Clifford derivative; for an exterior blade one obtains
\([x\wedge y_I]\partial_x=(-1)^{|I|}(M-|I|)y_I\).}

Two Fischer-type constructions are considered. The anticommutator with exterior creation is triangular and admits an explicit Green inverse after localization at its diagonal factors. \newrev{For right \(q\)-monogenic elements, the derivative is paired with right multiplication \(R_xF=Fx\). The homogeneous operators \(\partial^{Y,\mathrm R}_{x,q}R_x\) have generically nonzero determinants, which yields determinant-localized Fischer decompositions and explicit projectors.} The one- and two-vector determinants are computed explicitly. \newrev{After \(Q=q^M\), the two-vector determinant is nonzero for every \(0<q<1\) and every positive integer \(M\). For arbitrary finite support, a support filtration factors the determinant into exact-support terms; in degree zero, support rank \(p\) contributes the factor \([m-p]_q+p\).}
\end{abstract}

\medskip
\noindent\textbf{2020 Mathematics Subject Classification.}
Primary 30G35; Secondary 05A30, 15A66.

\smallskip
\noindent\textbf{Keywords.}
Radial algebra; Clifford analysis; $q$-vector derivative; Jackson calculus; Fischer decomposition; \newrev{right $q$-monogenicity}.

\section{Introduction}

Radial algebra was introduced by Sommen as an algebra of abstract vector variables whose anticommutators are central \cite{Sommen1997}. It retains the invariant algebraic rules of Clifford vector variables without fixing a coordinate realization, a quadratic form, or a dimension. If only finitely many abstract vectors are involved, a Clifford-polynomial realization is faithful in sufficiently high dimension. The abstract vector derivative then introduces the dimension through the scalar relation \(\partial_x[x]=m\), so the dimension may be treated as a parameter. This framework has been used for symbolic Clifford analysis, Fischer decompositions, Hermitian extensions, superspace, and Dirac complexes \cite{SabadiniStruppaSommenVanLancker2002,DeSchepperGuzmanAdanSommen2017,GuzmanAdan2018}. Standard background on Clifford analysis and the classical Dirac operator can be found in \cite{BrackxDelangheSommen1982}.

Several inequivalent \(q\)-deformations of Clifford analysis are known. Coulembier and Sommen introduced an axiomatic \(q\)-Dirac operator and developed associated operator identities, special functions, and integration \cite{CoulembierSommen2010,CoulembierSommen2011}. A coordinatewise Jackson calculus for commuting variables was studied in \cite{ZimmermannBernsteinSchneider2025}, while a \(q\)-Dirac operator on quantum Euclidean space was constructed in \cite{BernsteinZimmermannSchneider2026}. These theories serve different purposes: the first is based on a deformation of the one-vector Dirac relations, the second uses explicit Jackson partial derivatives, and the third is adapted to a noncommutative quantum space.

The aim of this paper is to construct an explicit operator inside an arbitrary radial algebra that differentiates with respect to one distinguished vector \(x\), allows a finite collection of additional abstract vectors as parameters, and is compatible with adjoining further parameter vectors. Here \emph{intrinsic} means that the construction uses only the radial-algebra structure and the central invariants \(x^2\), \(\{x,y_i\}\), and \(\{y_i,y_j\}\), without choosing coordinates, a Clifford realization, a quadratic form, or a fixed dimension. Compatibility under inclusions of finite parameter sets then defines an operator on the ambient radial algebra by direct limit. This notion is distinct from a coordinatewise Jackson construction; Section~2.2 shows that coordinatewise Jackson differentiation does not preserve the radial subalgebra.

\newrev{Throughout the paper, the classical right Clifford derivative is
\[
 [F]\partial_x:=-\sum_{j=1}^M\frac{\partial F}{\partial x_j}e_j,
 \qquad e_ie_j+e_je_i=-2\delta_{ij}.
\]
If \(A_p\) is an \(x\)-independent exterior \(p\)-vector, then
\[
 [x\wedge A_p]\partial_x=(-1)^p(M-p)A_p.
\]
This identity determines the exterior-degree sign in the definition of the right \(q\)-radial derivative. Section~3 proves the formula directly and records the corresponding low-degree formulas for \(1,x,y,xy,yx,x\wedge y\), and \(\{x,y\}\).}

A coordinatewise Jackson construction already fails in the one-vector radial subalgebra. If \(x=\sum_jx_je_j\) in a Clifford realization and ordinary partial derivatives are replaced coordinatewise by Jackson derivatives, the resulting operator does not preserve \(\R[x^2]\oplus x\R[x^2]\). A radial \(q\)-derivative must therefore dilate the vector variable as a whole and, in the presence of parameter vectors \(y_i\), act on the mixed scalar invariants \(x^2\) and \(\{x,y_i\}\).

For a finite set \(Y=\{y_1,\ldots,y_N\}\), we define a scalar Jackson calculus in the variables
\[
 r=x^2,\qquad s_i=\{x,y_i\},\qquad c_{ij}=\{y_i,y_j\},
\]
with dilation \(r\mapsto q^2r\), \(s_i\mapsto qs_i\), and \(c_{ij}\mapsto c_{ij}\). Combining this scalar calculus with the exterior decomposition relative to \(x\) yields the finite right \(q\)-vector derivative \(\partial^{Y,\mathrm R}_{x,q}\). Relabelling covariance and compatibility under \(Y\subset Z\) give a direct-limit operator \(\partial^{\mathrm R}_{x,q}\) on the scalar extensions of arbitrary radial algebras. We call an element \emph{right \(q\)-monogenic} if it lies in the kernel of the corresponding finite relative operator, or of the direct-limit operator in the global setting.

The first main theorem concerns exterior creation. The anticommutator of the right \(q\)-vector derivative with the operator that adjoins \(x\) exteriorly is triangular, with an explicit diagonal. After central localization at the diagonal factors, the anticommutator has an explicit Green inverse and gives complementary projections.

\newrev{The second main theorem concerns the right \(q\)-monogenic Fischer decomposition. We pair the right derivative with right multiplication \(R_xF=Fx\) and, on each finite homogeneous block, consider \(\Kop^Y_k=\partial^{Y,\mathrm R}_{x,q}R_x\). Its classical limit is \(\partial_x^{\mathrm R}R_x\). An explicit Clifford specialization, together with the standard classical Fischer decomposition, gives generic nonvanishing of the homogeneous determinants. After determinant localization, one obtains the right \(q\)-monogenic direct sum and an explicit projector, compatible with enlargement of the auxiliary set.}

\newrev{The determinants for the smallest supports are computed explicitly. With no auxiliary vector the homogeneous blocks are one-dimensional, while with one auxiliary vector all specialized determinant factors are positive for \(0<q<1\) and every positive integer dimension. For arbitrary support, a filtration reduces the determinant to exact-support factors, whose degree-zero factor at support rank \(p\) is \([m-p]_q+p\).}

\newrev{The paper is organized as follows. Section~2 introduces the radial-algebra setting, the coefficient ring, and the coordinatewise obstruction. Section~3 defines the relative right \(q\)-derivative and establishes its classical limit and basic low-degree formulas. Sections~4 and 5 prove, respectively, the exterior Green decomposition and the right \(q\)-monogenic Fischer decomposition. Section~6 computes the determinants for the smallest supports and studies exceptional numerical specializations.}

\newrev{The case of one auxiliary vector, \(Y=\{y\}\), is used as a running low-dimensional check throughout the paper. Example~\ref{ex:low-degree} tests the defining right-action signs on the elementary products; Example~\ref{ex:H-one-y} follows the same case through the exterior Green operator; Example~\ref{ex:right-pairing} shows why right rather than left multiplication is the appropriate Fischer partner; and Section~6.2 carries this same example through the complete homogeneous determinant calculation.}

The results are algebraic. The general Fischer theorem is formulated after central determinant localization. Closed determinant formulas are obtained in the one- and two-vector cases, while for arbitrary finite support the determinant is factored into exact-support determinants. \newrev{At a fixed numerical specialization \((q,M)\), the unlocalized decomposition follows whenever the corresponding determinant factors are nonzero. The explicit two-vector nonresonance result is therefore kept separate from the general finite-support statement.}

\section{Radial algebras, coefficient ring, and the coordinatewise obstruction}

\newrev{This section introduces the algebraic setting used throughout the paper. The coefficient \(Q\), the formal notation \(q^m\), and an integer dimension \(M\) are distinguished explicitly because localization and numerical specialization use them in different ways. We then show that independent coordinatewise Jackson dilations do not preserve the radial subalgebra, which motivates the whole-vector deformation introduced in Section~3.}

Let \(S\) be a set of abstract vector variables.  For algebra elements \(a,b\), write
\[
        \{a,b\}:=ab+ba,
        \qquad
        [a,b]:=ab-ba
\]
for the anticommutator and commutator, respectively.  The radial algebra \(R(S)\) is the associative algebra over \(\R\) generated by \(S\) subject to
\begin{equation}\label{eq:radial-relation}
        [\{u,v\},w]=0,
        \qquad u,v,w\in S.
\end{equation}
The scalar subalgebra \(R_0(S)\) is generated by the central anticommutators \(\{u,v\}\).  For every finite set \(\{x_1,\ldots,x_N\}\subset S\), each element has a unique expansion with coefficients in the scalar polynomial algebra and exterior factors \(x_{i_1}\wedge\cdots\wedge x_{i_p}\) \cite{Sommen1997,DeSchepperGuzmanAdanSommen2017}.

\subsection{Coefficient ring and dimension specializations}

\newrev{The deformation is formulated before choosing a numerical dimension. We keep \(q\) and the dimension parameter \(Q\) independent and later specialize \(Q=q^M\). This subsection also introduces the localization notation used to distinguish generic algebraic inverses from statements at a fixed numerical specialization.}

The deformation coefficients require a scalar extension.  Let \(q\) and \(Q\) be algebraically independent indeterminates and put
\begin{equation}\label{eq:coefficient-field}
        \B:=\R[q,q^{-1},Q,(1-q)^{-1}],
        \qquad
        \K:=\operatorname{Frac}(\B)=\R(q,Q),
        \qquad
        \RB(S):=\B\otimes_{\R}R(S).
\end{equation}
The symbol \(Q\) records the formal dimension.  For \(a\in\mathbb Z\), we use the notation
\begin{equation}\label{eq:formal-dimension-notation}
        q^{m+a}:=Qq^a,
        \qquad
        \qnum{m+a}:=\frac{1-Qq^a}{1-q},
\end{equation}
and for an integer \(n\),
\begin{equation}\label{eq:q-number}
        \qnum n:=\frac{1-q^n}{1-q}.
\end{equation}
Thus \(q^m\) is mnemonic notation for the independent coefficient \(Q\), not an algebraic relation in \(\B\) or its fraction field \(\K\).

For a positive integer \(M\), the numerical dimension specialization is the ring homomorphism
\begin{equation}\label{eq:dimension-specialization}
        \sigma_M:\B\longrightarrow\R(q),
        \qquad \sigma_M(Q)=q^M.
\end{equation}
After applying \(\sigma_M\), every expression \(\qnum{m+a}\) has a removable singularity at \(q=1\), with limit \(M+a\).  All classical-limit statements in this paper are understood in this sense: first specialize \(Q=q^M\), then let \(q\to1\).

\newrev{The following remark records the roles of \(Q\), \(m\), and \(M\) used throughout the paper.}

\newrev{\begin{remark}[The three dimension symbols]\label{rem:QmM}
The notation separates three roles:
\[
\begin{array}{c|c}
Q&\text{an independent central coefficient in }\B,\\
q^m&\text{mnemonic notation for }Q\text{ inside expressions such as }[m+a]_q,\\
M&\text{an actual positive integer used only after }Q=q^M.
\end{array}
\]
Thus statements over \(\B\) are formal in \(Q\); statements involving a Clifford dimension are made only after the specialization \(\sigma_M\). In particular, one should not manipulate \(m\) as an independent real variable inside \(\B\): only the combinations \(q^{m+a}=Qq^a\) and \([m+a]_q=(1-Qq^a)/(1-q)\) are defined algebraically.
\end{remark}}

All localizations below are taken with respect to multiplicative subsets of central coefficient rings.  Hence the usual commutative localization construction applies to the corresponding radial-algebra modules.  If \(\Omega\) is such a multiplicative set and \(\mathcal M\) is a module, we write
\[
        \mathcal M_{\Omega}:=\Omega^{-1}\mathcal M.
\]
For a single central element \(d\), the notation \(\mathcal M_d\) means localization at the multiplicative set \(\{1,d,d^2,\ldots\}\).

\newrev{Localization over the generic coefficient ring does not imply invertibility at every numerical specialization. A denominator \(d(q,Q)\) that is nonzero generically may satisfy \(d(q_0,q_0^M)=0\). For numerical specializations used below, the required nonvanishing conditions are therefore stated explicitly.}

For a scalar variable \(u\) and a parameter \(P\), write
\begin{equation}\label{eq:jackson}
        \delta^{(u)}_P f
        :=\frac{f(u)-f(Pu)}{(1-P)u},
        \qquad
        T^{(u)}_P f(u):=f(Pu).
\end{equation}
These are the standard Jackson difference and dilation operators; see, for example, \cite{KacCheung2002}.

\subsection{Failure of coordinatewise Jackson differentiation}

\newrev{Before defining an intrinsic radial operator, it is useful to see explicitly why the most direct coordinatewise construction fails. The calculation below uses only the scalar radial polynomial \(x^4\); hence the obstruction appears before any exterior or multi-vector complication. The one-vector whole-dilation model at the end of the subsection provides the prototype for the later relative construction.}

Take the Clifford convention \(e_ie_j+e_je_i=-2\delta_{ij}\), let \(x=\sum_{j=1}^M x_je_j\), and put \(\rho=\sum_jx_j^2\), so that \(x^2=-\rho\).  Define the coordinatewise right Jackson--Dirac operator
\begin{equation}\label{eq:coord-jackson}
        Q_x^{\mathrm R}F:=-\sum_{j=1}^M D_{q,x_j}F\,e_j,
        \qquad
        D_{q,x_j}F=\frac{F(x)-F(x_1,\ldots,qx_j,\ldots,x_M)}{(1-q)x_j}.
\end{equation}
With this right-handed sign convention,
\[
        Q_x^{\mathrm R}[x]=M,
        \qquad
        Q_x^{\mathrm R}[x^2]=\qnum2x.
\]
For comparison, the left-handed operator \(Q_x^{\mathrm L}F:=\sum_{j=1}^M e_jD_{q,x_j}F\) satisfies \(Q_x^{\mathrm L}[x]=-M\) and \(Q_x^{\mathrm L}[x^2]=-\qnum2x\).
For the scalar radial polynomial \(x^4=\rho^2\), one has
\[
D_{q,x_j}(\rho^2)
=(1+q)x_j\left((1+q^2)x_j^2+2\sum_{i\ne j}x_i^2\right).
\]
When \(M\ge2\) and \(q\ne1\), the coefficient of \(e_j\) in \(Q_x^{\mathrm R}(x^4)\) depends on \(x_j^2\) separately.  It therefore cannot be expressed as a scalar polynomial in \(x^2\) multiplied by \(x\).  Thus \(Q_x^{\mathrm R}\) does not preserve
\[
        \R[x^2]\oplus x\R[x^2].
\]
This proves that independent coordinate dilations do not define a radial \(q\)-vector derivative.

For one abstract vector, the unique exterior decomposition has the form
\[
        F=A(r)+xB(r),
        \qquad
        A,B\in\B[r],
        \qquad r=x^2.
\]
The whole-vector dilation \(x\mapsto qx\), equivalently \(r\mapsto q^2r\), motivates the requirement that powers be lowered by the corresponding \(q\)-numbers.  This leads to the following one-vector model.
\begin{equation}\label{eq:one-vector-closed}
\partial^{\mathrm{rad},\mathrm R}_{x,q}(A+xB)
=(1+q)x\delta^{(r)}_{q^2}A+\qnum m B+Q(1+q)r\delta^{(r)}_{q^2}B.
\end{equation}
Equivalently,
\[
\partial^{\mathrm{rad},\mathrm R}_{x,q}(x^{2n})=\qnum{2n}x^{2n-1}\quad(n\ge1),
\qquad
\partial^{\mathrm{rad},\mathrm R}_{x,q}(x^{2n+1})=\qnum{m+2n}x^{2n}\quad(n\ge0).
\]
The odd rule uses the identity \(\qnum{m+2n}=\qnum m+Q\qnum{2n}\), so it combines the initial value \(\partial^{\mathrm{rad},\mathrm R}_{x,q}(x)=\qnum m\) with the same whole-vector radial lowering as in even degree.  Thus \eqref{eq:one-vector-closed} is exactly the operator determined on the monomial basis by the two displayed rules.  After \(Q=q^M\) and \(q\to1\), it becomes the ordinary right radial vector derivative.

\section{The relative right \texorpdfstring{\(q\)}{q}-vector derivative}

\newrev{This section constructs the relative right \(q\)-vector derivative. We first derive the classical right derivative of an exterior blade. We then define the scalar Jackson operator on the central invariants, combine it with the exterior decomposition, establish the classical limit and low-degree formulas, and prove compatibility of the finite relative operators under enlargement of the parameter set.}

\newrev{\subsection{Right-action convention and the classical blade identity}}

\newrev{We first fix the left/right conventions independently of the deformation. In a Clifford realization of dimension \(M\) we use
\begin{equation}\label{eq:clifford-convention}
 e_ie_j+e_je_i=-2\delta_{ij},
 \qquad
 [F]\partial_x:=-\sum_{j=1}^M\frac{\partial F}{\partial x_j}e_j,
 \qquad x=\sum_{j=1}^Mx_je_j.
\end{equation}
Thus the basis vectors belonging to the differential operator occur on the \emph{right}. Scalar coefficients in the exterior normal form are central and are written to the right of the exterior factor. Later, the monogenic Fischer decomposition will use right multiplication \(R_xF=Fx\). By contrast, the auxiliary exterior-creation operator of Section~4 sends \(y_I\) to \(x\wedge y_I\); it is not full multiplication by \(x\).}

\newrev{For reference, the sides used in the paper are summarized by
\[
\begin{array}{c|c|c}
\text{object} & \text{notation} & \text{side/convention}\\ \hline
\text{classical derivative} &[F]\partial_x&\text{basis vector }e_j\text{ on the right}\\
\text{finite }q\text{-derivative}&\partial^{Y,\mathrm R}_{x,q}&\text{deforms }[F]\partial_x\\
\text{Fischer multiplication}&R_xF=Fx&\text{full multiplication on the right}\\
\text{exterior creation}&\Cop_x^Y(y_I A)=(x\wedge y_I)A&\text{auxiliary exterior operation}\\
\text{scalar coefficients}&A,B\in\A_x(Y)&\text{central; written on the right}
\end{array}
\]}

\newrev{The following lemma gives the classical right derivative of an exterior blade. Its coefficient determines the blade-lowering term in the \(q\)-deformation.}

\begin{lemma}[Classical right derivative of an exterior blade]\newrev{\label{lem:classical-blade}}
\newrev{Let \(A_p\) be an \(x\)-independent homogeneous exterior \(p\)-vector in a Clifford algebra satisfying \eqref{eq:clifford-convention}. Then
\begin{equation}\label{eq:classical-blade}
 [x\wedge A_p]\partial_x=(-1)^p(M-p)A_p.
\end{equation}}
\end{lemma}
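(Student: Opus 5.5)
The plan is a direct computation in the Clifford realization. It exploits two facts: \(x\wedge A_p\) is linear in the coordinates \(x_j\), and every homogeneous \(p\)-vector is a combination of orthonormal basis blades. Since \(A_p\) is \(x\)-independent, its coefficients are constants for \(\partial/\partial x_j\). Both sides of \eqref{eq:classical-blade} are linear over such coefficients, which are real-valued and hence central. So it suffices to treat \(A_p=e_I:=e_{i_1}\cdots e_{i_p}\) with \(i_1<\cdots<i_p\) distinct. For distinct indices the Clifford product and the wedge product coincide.

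\emph{Step 1: differentiate.} Write \(x\wedge e_I=\sum_{j=1}^M x_j\,(e_j\wedge e_I)\). Then \(\partial(x\wedge e_I)/\partial x_j=e_j\wedge e_I\), and by \eqref{eq:clifford-convention}
\[
[x\wedge e_I]\partial_x=-\sum_{j=1}^M (e_j\wedge e_I)\,e_j .
\]

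\emph{Step 2: move the vector through the blade.} Use \(e_j\wedge e_I=(-1)^p\, e_I\wedge e_j\). This rewrites the sum as \(-(-1)^p\sum_j (e_I\wedge e_j)e_j\).

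\emph{Step 3: evaluate term by term.}
\begin{itemize}
\item If \(j\in I\), then \(e_I\wedge e_j=0\), so the term vanishes.
\item If \(j\notin I\), then \(e_I\wedge e_j=e_Ie_j\), and \((e_Ie_j)e_j=e_Ie_j^2=-e_I\).
\end{itemize}
Hence \(\sum_j(e_I\wedge e_j)e_j=-(M-p)e_I\). Combining with Step 2 gives \([x\wedge e_I]\partial_x=(-1)^p(M-p)e_I\), and linearity yields \eqref{eq:classical-blade} for a general \(A_p\).

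The only delicate point is sign bookkeeping: the overall minus in the right derivative, the reordering sign \((-1)^p\), and \(e_j^2=-1\). These must combine to exactly \((-1)^p\) rather than \((-1)^{p+1}\). I would check the result against the case \(p=0\), where \([x]\partial_x=M\) agrees with \(Q_x^{\mathrm R}[x]=M\) at \(q=1\). A second check is \(p=1\) with \(A_1=e_1\): there \(x\wedge e_1=\sum_{j\ne1}x_je_je_1\), and a direct computation gives \(-\sum_{j\ne1}e_je_1e_j=-(M-1)e_1\), as predicted.
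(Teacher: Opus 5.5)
Your computation is correct, including the sign bookkeeping, but it takes a different route from the paper. The paper never differentiates the wedge directly. It first proves the sandwich identity \(\sum_j e_jA_pe_j=(-1)^p(2p-M)A_p\) on basis blades: there are \(M-p\) terms contributing \((-1)^{p+1}A_p\) and \(p\) terms contributing \((-1)^pA_p\). From this it deduces \([xA_p]\partial_x=(-1)^p(M-2p)A_p\) and \([A_px]\partial_x=MA_p\), and it combines the two through \(x\wedge A_p=\tfrac12\bigl(xA_p+(-1)^pA_px\bigr)\).

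You instead expand \(x\wedge e_I=\sum_j x_j\,(e_j\wedge e_I)\) and observe that the \(p\) directions \(j\in I\) drop out entirely. The factor \(M-p\) then appears directly as the number of directions orthogonal to the blade, each contributing the same sign. This is shorter and avoids the cancellation between \(M-2p\) and \(M\) that the paper's half-sum produces.

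The paper's route has the side benefit of giving separate formulas for \([xA_p]\partial_x\) and \([A_px]\partial_x\). For \(A_1=y\) these specialize to \([xy]\partial_x=(2-M)y\) and \([yx]\partial_x=My\), the identities recorded right after the lemma; the latter is the classical degree-zero value of \(\partial_x^{\mathrm R}R_x\) on \(y\) used in Section~5. In your approach they follow just as easily from \(xy=x\wedge y+\tfrac12\{x,y\}\) and \([\{x,y\}]\partial_x=2y\), so nothing is lost.
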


\begin{proof}
\newrev{It is enough to verify the standard Clifford identity
\begin{equation}\label{eq:sandwich-identity}
 \sum_{j=1}^M e_jA_pe_j=(-1)^p(2p-M)A_p.
\end{equation}
For a basis blade \(A_p=e_{i_1}\cdots e_{i_p}\) with distinct indices, a term with \(j\notin\{i_1,\ldots,i_p\}\) anticommutes through all \(p\) factors and contributes \((-1)^{p+1}A_p\), whereas a term with \(j\in\{i_1,\ldots,i_p\}\) contributes \((-1)^pA_p\). There are \(M-p\) terms of the first type and \(p\) of the second, giving \eqref{eq:sandwich-identity}; linearity gives the general case.}

\newrev{Because \(A_p\) is independent of \(x\),
\[
 [xA_p]\partial_x=-\sum_j e_jA_pe_j=(-1)^p(M-2p)A_p,
 \qquad
 [A_px]\partial_x=-\sum_jA_pe_je_j=MA_p.
\]
For a vector and a homogeneous exterior \(p\)-vector,
\[
 x\wedge A_p=\frac12\bigl(xA_p+(-1)^pA_px\bigr).
\]
Applying the two preceding derivative identities yields
\[
 [x\wedge A_p]\partial_x
 =\frac12\left((-1)^p(M-2p)+(-1)^pM\right)A_p
 =(-1)^p(M-p)A_p.
\]
This proves \eqref{eq:classical-blade}.}
\end{proof}

\newrev{For \(p=1\), with \(w=x\wedge y=(xy-yx)/2\), formula~\eqref{eq:classical-blade} gives
\[
 [w]\partial_x=-(M-1)y.
\]
Equivalently, since \(\{x,y\}=xy+yx\), one has
\[
 [xy]\partial_x=(2-M)y,
 \qquad
 [yx]\partial_x=My.
\]
These identities are also obtained as the \(q\to1\) limits of the formulas in Example~\ref{ex:low-degree}.}

\subsection{Scalar Jackson calculus on the relative invariants}

\newrev{We next define the part of the derivative acting on central scalar coefficients. Only the invariants involving the distinguished vector \(x\) are dilated, while the internal coefficients \(c_{ij}\) remain fixed. The monomial formula~\eqref{eq:scalar-monomial-action} will be used repeatedly in the determinant computations of Section~6.}

Fix \(x\in S\) and a finite set \(Y=\{y_1,\ldots,y_N\}\subset S\setminus\{x\}\). Define
\begin{equation}\label{eq:relative-scalars}
 r=x^2,
 \qquad
 s_i=\{x,y_i\},
 \qquad
 c_{ij}=\{y_i,y_j\}=c_{ji},
\end{equation}
and let
\begin{equation}\label{eq:scalar-algebra}
 \A_x(Y):=\B[r,s_1,\ldots,s_N,c_{ij}:1\le i\le j\le N].
\end{equation}
The relative dilation is
\begin{equation}\label{eq:scalar-dilation}
 r\mapsto q^2r,
 \qquad
 s_i\mapsto qs_i,
 \qquad
 c_{ij}\mapsto c_{ij}.
\end{equation}
The scalar-input part of the right \(q\)-vector derivative is
\begin{equation}\label{eq:relative-vector}
 \dscal
 :=(1+q)x\delta^{(r)}_{q^2}
 +2T^{(r)}_{q^2}\sum_{i=1}^N y_i\delta^{(s_i)}_q.
\end{equation}
In particular,
\[
 \dscal(r)=\qnum2x,
 \qquad
 \dscal(s_i)=2y_i,
 \qquad
 \dscal(c_{ij})=0.
\]
More generally, for \(A=r^as^\beta P(c)\), where
\(P(c)\in\B[c_{ij}:1\le i\le j\le N]\),
\begin{equation}\label{eq:scalar-monomial-action}
\begin{aligned}
\dscal(A)
&=\qnum{2a}x r^{a-1}s^\beta P(c)\\
&\quad+2q^{2a}r^a\sum_{i:\,\beta_i>0}\qnum{\beta_i}\,y_i s^{\beta-e_i}P(c).
\end{aligned}
\end{equation}
Here the first term is omitted when \(a=0\), and \(e_i\) denotes the \(i\)-th standard multi-index. The first term lowers the radial degree according to \(r\mapsto q^2r\), while the shift \(T^{(r)}_{q^2}\) in the mixed terms supplies the factor \(q^{2a}\) dictated by the chosen ordered whole-vector dilation.

For \(I=\{i_1<\cdots<i_p\}\subset\{1,\ldots,N\}\), put
\[
 y_I:=y_{i_1}\wedge\cdots\wedge y_{i_p},
 \qquad y_\varnothing=1.
\]
The exterior decomposition relative to \(x\) is
\begin{equation}\label{eq:wedge-decomposition}
 \RB(\{x\}\cup Y)=
 \bigoplus_I y_I\A_x(Y)
 \oplus
 \bigoplus_I (x\wedge y_I)\A_x(Y).
\end{equation}

\newrev{\subsection{Definition, classical limit, and low-degree formulas}}

\newrev{Combining the scalar Jackson calculus with the exterior decomposition gives the finite relative operator. The blade-lowering term is determined by Lemma~\ref{lem:classical-blade}, while the second term differentiates the central scalar coefficient.}

\begin{definition}[Finite relative right \(q\)-vector derivative]\label{def:finite-derivative}
The right \(q\)-vector derivative relative to \(x\) and \(Y\) is the \(\B\)-linear operator
\[
 \dY:\RB(\{x\}\cup Y)\longrightarrow\RB(\{x\}\cup Y)
\]
defined, for \(p=|I|\), by
\begin{align}
\dY(y_IA)&=y_I\dscal(A),\label{eq:derivative-y}\\
{\color{black}\dY((x\wedge y_I)B)}&={\color{black}(-1)^p\qnum{m-p}y_IB
      +q^{m-p}(x\wedge y_I)\dscal(B),}\label{eq:derivative-x}
\end{align}
for \(A,B\in\A_x(Y)\).
\end{definition}

\newrev{By Lemma~\ref{lem:classical-blade}, the blade-lowering term in \eqref{eq:derivative-x} carries the factor \((-1)^p\). The scalar-differentiation term has no additional parity factor: in the classical limit it is \((x\wedge y_I)\) multiplied by the right derivative of the central scalar coefficient.}

For \(Y=\varnothing\), Definition~\ref{def:finite-derivative} is exactly the one-vector operator \eqref{eq:one-vector-closed}. It also gives
\begin{equation}\label{eq:pure-blade-actions}
 \dY(y_I)=0,
 \qquad
 \newrev{\dY(x\wedge y_I)=(-1)^{|I|}\qnum{m-|I|}y_I.}
\end{equation}

\newrev{The classical limit of the finite relative operator is given by the following proposition.}

\begin{proposition}[Classical limit]\label{prop:classical-limit-order}
Let \(M\) be a positive integer and specialize \(Q=q^M\). Then the limit \(q\to1\) of \(\dY\) exists and is the radial-algebra operator \(\dclass\) characterized on \eqref{eq:wedge-decomposition} by
\begin{align}
\dclass(y_IA)
 &=y_I\left(2x\frac{\partial A}{\partial r}
      +2\sum_{i=1}^Ny_i\frac{\partial A}{\partial s_i}\right),\label{eq:classical-right-y}\\
{\color{black}\dclass((x\wedge y_I)B)}
 &={\color{black}(-1)^{|I|}(M-|I|)y_IB+(x\wedge y_I)
 \left(2x\frac{\partial B}{\partial r}
      +2\sum_{i=1}^Ny_i\frac{\partial B}{\partial s_i}\right).}\label{eq:classical-right-x}
\end{align}
\newrev{Under every \(M\)-dimensional Clifford realization of the radial algebra, this operator agrees with the standard right Clifford derivative \([F]\partial_x=-\sum_j(\partial F/\partial x_j)e_j\).}
\end{proposition}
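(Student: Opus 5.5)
The proof has two parts. First we show that the $q\to1$ limit of $\dY$ exists and is given by \eqref{eq:classical-right-y}--\eqref{eq:classical-right-x}. Then we show that this limit operator is the right Clifford derivative in every $M$-dimensional realization.

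\textbf{Existence of the limit.} By $\B$-linearity and the decomposition \eqref{eq:wedge-decomposition}, it suffices to treat the basis elements $y_I r^a s^\beta P(c)$ and $(x\wedge y_I)r^a s^\beta P(c)$. On these, \eqref{eq:scalar-monomial-action} gives $\dscal$ explicitly, with coefficients $\qnum{2a}$, $q^{2a}$ and $\qnum{\beta_i}$. Their limits are $2a$, $1$ and $\beta_i$. These are exactly the coefficients of $2x\,\partial_r$ and $2y_i\,\partial_{s_i}$ applied to the monomial, since $2x\cdot a r^{a-1}$ and $2y_i\cdot\beta_i s^{\beta-e_i}$ appear. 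In \eqref{eq:derivative-x} the remaining coefficients are $\sigma_M(\qnum{m-p})=\qnum{M-p}\to M-p$ and $q^{M-p}\to1$. This yields \eqref{eq:classical-right-y}--\eqref{eq:classical-right-x} on the basis, and hence, by linearity, on all elements.

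\textbf{Agreement with the Clifford derivative.} Fix an $M$-dimensional realization with $x=\sum_j x_je_j$, and with $y_i$ independent of $x$. The operator $D:F\mapsto[F]\partial_x=-\sum_j(\partial_jF)e_j$ satisfies a right-sided product rule. Because $e_j$ sits on the right,
\[
[FG]\partial_x=F\,[G]\partial_x
\]
whenever $G$ is a scalar function, i.e.\ central. For a central coefficient $A(r,s)$ the chain rule gives
\[
\partial_j A=\partial_rA\,\partial_jr+\sum_i\partial_{s_i}A\,\partial_js_i,
\]
with $r=x^2=-\sum_j x_j^2$ and $s_i=\{x,y_i\}=-2\sum_j x_jy_{i,j}$. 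Then
\[
-\sum_j(\partial_jr)e_j=2\sum_j x_je_j=2x,
\qquad
-\sum_j(\partial_js_i)e_j=2y_i .
\]
Hence $[A]\partial_x=2x\,\partial_rA+2\sum_iy_i\,\partial_{s_i}A$. Since $y_I$ is $x$-independent, $[y_IA]\partial_x=y_I[A]\partial_x$, which is \eqref{eq:classical-right-y}.

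For the second form we need the product rule with a non-central left factor. Differentiating each coordinate gives
\[
\partial_j\big((x\wedge y_I)B\big)=e_j\wedge y_I\,B+(x\wedge y_I)\,\partial_jB ,
\]
where the two terms come from differentiating $x\wedge y_I$ and $B$ respectively. Multiplying on the right by $-e_j$ and summing, the first term sums to $[x\wedge y_I]\partial_x\,B$, using that $B$ is central so it commutes with $e_j$. Lemma~\ref{lem:classical-blade} evaluates this as $(-1)^p(M-p)y_IB$. The second term gives $(x\wedge y_I)[B]\partial_x$, which the scalar computation above already evaluates. Together these give \eqref{eq:classical-right-x}.

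The main obstacle is the centrality bookkeeping. We must justify commuting $B$ past $e_j$: $B$ is a scalar polynomial in the invariants, so it is a scalar, i.e.\ real-valued, function in the realization. We must also justify that the $e_j$ coming from differentiating $x$ inside the wedge is correctly captured by Lemma~\ref{lem:classical-blade}. For this we apply the lemma with $A_p=y_I$ at each fixed point, which requires the lemma's identity to hold pointwise for an $x$-independent blade; its proof gives exactly that. Finally, the realization map is an algebra homomorphism and both operators are determined on the spanning set \eqref{eq:wedge-decomposition}, so agreement on these generators finishes the proof.
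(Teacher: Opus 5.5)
Your proposal is correct and follows essentially the paper's route. You take coefficientwise limits of the \(q\)-numbers in \eqref{eq:scalar-monomial-action} and \eqref{eq:derivative-x}, then identify the limit with \([F]\partial_x\) using the right Leibniz rule for a central coefficient \(B\), Lemma~\ref{lem:classical-blade} for the blade term, and the chain rule for \(r\) and \(s_i\), which the paper records just after its proof.

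One small slip: the rule \([FG]\partial_x=F\,[G]\partial_x\) holds because \(F\) is independent of \(x\) (as for \(F=y_I\)), not because \(G\) is central; it fails for \(F=x\), \(G=1\). You only apply it in the valid case, and you derive the correct rule \([(x\wedge y_I)B]\partial_x=[x\wedge y_I]\partial_x\,B+(x\wedge y_I)[B]\partial_x\) separately.
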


\begin{proof}
The Jackson differences in \eqref{eq:relative-vector} converge to the corresponding ordinary partial derivatives after \(Q=q^M\), and \(q^{m-|I|}\to1\). \newrev{Moreover,
\[
 (-1)^{|I|}\qnum{m-|I|}\longrightarrow (-1)^{|I|}(M-|I|).
\]
Thus the limit on a pure exterior blade agrees exactly with Lemma~\ref{lem:classical-blade}. For a general scalar coefficient \(B\), centrality of \(B\) gives the ordinary right Leibniz rule
\[
 [(x\wedge y_I)B]\partial_x
 =[x\wedge y_I]\partial_x\,B
 +(x\wedge y_I)[B]\partial_x,
\]
and the scalar right derivative is precisely the limit of \(\dscal\).} This proves \eqref{eq:classical-right-y}--\eqref{eq:classical-right-x}.
\end{proof}

\newrev{The coordinate comparison in Proposition~\ref{prop:classical-limit-order} can be seen directly. If
\[
 y_i=\sum_{j=1}^M y_{ij}e_j,
\]
then in the Clifford realization
\[
 r=-\sum_{j=1}^M x_j^2,
 \qquad
 s_i=-2\sum_{j=1}^M x_jy_{ij},
 \qquad
 c_{ij}=-2\sum_{\ell=1}^M y_{i\ell}y_{j\ell}.
\]
Hence the ordinary chain rule for a central scalar coefficient gives
\[
 [A(r,s,c)]\partial_x
 =2x\frac{\partial A}{\partial r}
 +2\sum_{i=1}^Ny_i\frac{\partial A}{\partial s_i},
\]
which is exactly \eqref{eq:classical-right-y}; combining this with Lemma~\ref{lem:classical-blade} gives \eqref{eq:classical-right-x}. Thus the abstract classical-limit formulas agree term by term with their coordinate realizations, including the right-action signs.}

\newrev{The following example gives the action on the lowest-degree elements and compares the abstract formulas with their classical limits.}

\begin{example}[Low-degree formulas]\newrev{\label{ex:low-degree}}
\newrev{Take one auxiliary vector \(Y=\{y\}\), and write
\[
 s=\{x,y\},\qquad w=x\wedge y=\frac{xy-yx}{2}.
\]
We compute each entry directly from Definition~\ref{def:finite-derivative}. First,
\[
 \partial^{\{y\},\mathrm R}_{x,q}(1)=0,
 \qquad
 \partial^{\{y\},\mathrm R}_{x,q}(y)=0,
\]
because their scalar coefficients are constant. Since \(x=x\wedge1\), the case \(p=0\) of \eqref{eq:derivative-x} gives
\[
 \partial^{\{y\},\mathrm R}_{x,q}(x)=\qnum m.
\]
The mixed scalar satisfies \(\dscal(s)=2y\), and therefore
\[
 \partial^{\{y\},\mathrm R}_{x,q}(s)=2y.
\]
For the one-blade \(w=x\wedge y\), we have \(p=1\) and constant scalar coefficient, so
\[
 \partial^{\{y\},\mathrm R}_{x,q}(w)=-\qnum{m-1}y.
\]
Finally, use the exterior/scalar decompositions
\[
 xy=w+\frac{s}{2},\qquad yx=-w+\frac{s}{2}.
\]
Linearity gives
\begin{align*}
 \partial^{\{y\},\mathrm R}_{x,q}(xy)
 &= -\qnum{m-1}y+\frac12(2y)
   =\bigl(1-\qnum{m-1}\bigr)y,\\
 \partial^{\{y\},\mathrm R}_{x,q}(yx)
 &= \qnum{m-1}y+\frac12(2y)
   =\bigl(1+\qnum{m-1}\bigr)y.
\end{align*}
Thus all seven elementary values are
\[
\begin{array}{c|c|c}
F&\partial^{\{y\},\mathrm R}_{x,q}F&\displaystyle\lim_{q\to1,\,Q=q^M}\partial^{\{y\},\mathrm R}_{x,q}F\\ \hline
1&0&0\\
x&\qnum m&M\\
y&0&0\\
\{x,y\}=s&2y&2y\\
x\wedge y=w&-\qnum{m-1}y&-(M-1)y\\
xy&\bigl(1-\qnum{m-1}\bigr)y&(2-M)y\\
yx&\bigl(1+\qnum{m-1}\bigr)y&My
\end{array}
\]
The relations are mutually consistent: adding the last two rows recovers the row for \(s=xy+yx\), while subtracting them and dividing by two recovers the row for \(w=(xy-yx)/2\).}

\newrev{The parity change can also be seen one degree higher. For two independent auxiliary vectors \(y_1,y_2\), put \(A_2=y_1\wedge y_2\). Then \eqref{eq:pure-blade-actions} gives
\[
 \partial^{\{y_1,y_2\},\mathrm R}_{x,q}(x\wedge A_2)
 =+\qnum{m-2}A_2,
\]
whose classical limit is \((M-2)A_2\). Hence the sign alternates exactly with exterior degree, as Lemma~\ref{lem:classical-blade} requires.}
\end{example}

\newrev{In particular, after \(Q=q^M\) and \(q\to1\), the formula for \(yx\) gives \([yx]\partial_x=My\). The element \(yx=R_x(y)\) reappears in the right Fischer operator of Section~5.}

\newrev{\subsection{Relabelling covariance and the direct limit}}

\newrev{To obtain an intrinsic operator by direct limit, the finite construction must be invariant under relabelling and compatible with enlargement of the parameter set. These properties are established next.}

\begin{proposition}[Relabelling covariance]\label{prop:relabelling}
Every bijection \(\tau:Y\to Y'\) induces a radial-algebra isomorphism that intertwines \(\partial^{Y,\mathrm R}_{x,q}\) and \(\partial^{Y',\mathrm R}_{x,q}\). In particular, the operator is independent of the ordering used to write the exterior basis.
\end{proposition}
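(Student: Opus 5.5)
The plan is to build the isomorphism from the universal property of the radial algebra and then check that every ingredient of Definition~\ref{def:finite-derivative} is transported correctly. The bijection \(\tau:Y\to Y'\) extends, by \(x\mapsto x\), to a bijection of generating sets \(\{x\}\cup Y\to\{x\}\cup Y'\). Since the defining relations \eqref{eq:radial-relation} are stated uniformly for all generators, this map induces an algebra isomorphism
\[
\Phi_\tau:\RB(\{x\}\cup Y)\to\RB(\{x\}\cup Y').
\]
Its inverse is induced by \(\tau^{-1}\).

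The map \(\Phi_\tau\) preserves anticommutators. Hence it sends \(r\mapsto r\), \(s_i\mapsto s_{\tau(i)}\) and \(c_{ij}\mapsto c_{\tau(i)\tau(j)}\), and it restricts to a \(\B\)-algebra isomorphism \(\A_x(Y)\to\A_x(Y')\) that simply renames the variables.

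The first step is to show that \(\Phi_\tau\) commutes with \(\dscal\). The dilation \eqref{eq:scalar-dilation} treats all \(s_i\) identically and fixes all \(c_{ij}\), so \(\Phi_\tau\) commutes with \(T^{(r)}_{q^2}\) and with \(\delta^{(r)}_{q^2}\). It also satisfies
\[
\Phi_\tau\circ\delta^{(s_i)}_q=\delta^{(s_{\tau(i)})}_q\circ\Phi_\tau .
\]
Moreover \(\Phi_\tau(x)=x\) and \(\Phi_\tau(y_i)=y'_{\tau(i)}\). The sum \(\sum_i y_i\delta^{(s_i)}_q\) in \eqref{eq:relative-vector} is then merely reindexed, so \(\Phi_\tau\circ\dscal=\partial^{\mathrm{sc},Y'}_{x,q}\circ\Phi_\tau\). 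The monomial formula \eqref{eq:scalar-monomial-action} gives a direct check of this.

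The second step handles the exterior part, which I expect to be the main obstacle. Exterior blades are ordered products, so \(\Phi_\tau(y_I)\) is the wedge of the \(y'_{\tau(i)}\) in the order \(i_1<\cdots<i_p\). This equals \(\varepsilon\, y'_{\tau(I)}\), where \(\varepsilon=\pm1\) is the sign of the permutation that sorts \(\tau(i_1),\dots,\tau(i_p)\). Similarly \(\Phi_\tau(x\wedge y_I)=\varepsilon\,(x\wedge y'_{\tau(I)})\), with the same sign because \(x\) stays in front.

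The key observation is that both defining formulas \eqref{eq:derivative-y}--\eqref{eq:derivative-x} are linear in the blade factor. The coefficients \((-1)^p\), \(\qnum{m-p}\) and \(q^{m-p}\) depend only on \(p=|I|=|\tau(I)|\). The sign \(\varepsilon\) therefore factors out of both sides, and each formula holds equally for the reordered blade.

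It also needs justifying that \(\Phi_\tau\) maps the decomposition \eqref{eq:wedge-decomposition} onto the corresponding decomposition for \(Y'\). This holds because the wedge is intrinsically defined as the antisymmetrized product, so \(\Phi_\tau\) preserves it; with uniqueness of the expansion, this gives the claim. Applying \(\Phi_\tau\) to \eqref{eq:derivative-x} then yields
\[
(-1)^p\qnum{m-p}\,\varepsilon\, y'_{\tau(I)}\Phi_\tau(B)+q^{m-p}\,\varepsilon\,(x\wedge y'_{\tau(I)})\,\partial^{\mathrm{sc},Y'}_{x,q}\Phi_\tau(B).
\]
This is exactly \(\partial^{Y',\mathrm R}_{x,q}\) applied to \(\varepsilon(x\wedge y'_{\tau(I)})\Phi_\tau(B)\). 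The case \eqref{eq:derivative-y} is handled in the same way, and \(\B\)-linearity then gives the intertwining on the whole algebra.

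The final assertion follows by taking \(Y'=Y\) and letting \(\tau\) be a permutation of the labels. Changing the ordering used to write the exterior basis replaces each basis blade by \(\pm\) a reordered one. By the second step, the operator computed in the new basis agrees with the operator computed in the old one.
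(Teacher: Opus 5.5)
Your proposal is correct and follows essentially the same route as the paper. Relabelling only renames \(s_i\) and \(c_{ij}\) in the scalar Jackson calculus, and reordering a blade introduces a sign \(\varepsilon\) that factors out of both defining formulas, because their coefficients depend only on \(p=|I|\); you simply supply more detail than the paper's brief equivariance argument, namely the construction of \(\Phi_\tau\) from the defining relations and the explicit sign bookkeeping.
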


\begin{proof}
A bijection \(\tau\) permutes the variables \(s_i\), the coefficients \(c_{ij}\), and the exterior generators. Every term in \eqref{eq:relative-vector}, \eqref{eq:derivative-y}, and \eqref{eq:derivative-x} is equivariant under this simultaneous relabelling. \newrev{The parity factor \((-1)^{|I|}\) depends only on the exterior degree and is therefore invariant under relabelling.} Changing the chosen order of \(Y\) only changes the signs used to represent the same exterior elements, and both sides of the defining formulas transform with the same signs.
\end{proof}

\newrev{We next establish compatibility under inclusions of finite parameter sets.}

\begin{theorem}[Direct-limit right \(q\)-vector derivative]\label{thm:direct-limit-derivative}
For fixed \(x\in S\), the finite operators \(\partial^{Y,\mathrm R}_{x,q}\) are compatible under inclusions of finite sets
\[
 Y\subset Z\subset S\setminus\{x\}.
\]
Consequently, they define a \(\B\)-linear operator
\begin{equation}\label{eq:direct-limit-derivative}
 \dglob:\RB(S)\longrightarrow\RB(S)
\end{equation}
for every radial algebra.
\end{theorem}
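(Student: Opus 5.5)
The plan is to reduce compatibility to a single adjoined vector, check that both defining formulas in Definition~\ref{def:finite-derivative} are unchanged when the parameter set grows, and then pass to the direct limit by the usual cofinality argument.

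\textbf{Step 1 (embeddings).} First I make the inclusions precise. For finite \(Y\subset Z\subset S\setminus\{x\}\), the generator inclusion induces an algebra map \(\iota:R(\{x\}\cup Y)\to R(\{x\}\cup Z)\). I will show \(\iota\) is injective by a retraction. Sending every generator of \(Z\setminus Y\) to \(0\) and fixing the others preserves the relations \eqref{eq:radial-relation}, since both sides become \(0\) or stay as before. It therefore defines an algebra map \(\pi\) with \(\pi\circ\iota=\mathrm{id}\). The same argument embeds each finite \(R(\{x\}\cup Y)\) into \(R(S)\), and everything extends to \(\B\)-scalar extensions. By induction on \(|Z\setminus Y|\) it suffices to treat \(Z=Y\cup\{z\}\).

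\textbf{Step 2 (the decompositions match).} Take \(F\in\RB(\{x\}\cup Y)\) and its decomposition \eqref{eq:wedge-decomposition} relative to \(Y\),
\[
F=\sum_I y_IA_I+\sum_I (x\wedge y_I)B_I,\qquad A_I,B_I\in\A_x(Y).
\]
Order \(Z\) by putting \(z\) last; by Proposition~\ref{prop:relabelling} the choice of order is harmless. Under \(\iota\) the invariants \(r,s_i,c_{ij}\) and the blades \(y_I\), \(x\wedge y_I\) with \(I\subset Y\) map to the same-named elements of the larger algebra, and \(\A_x(Y)\subset\A_x(Z)\) as the subring free of \(s_z\) and of \(c_{iz}\). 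Hence the displayed expression is already an expansion of \(\iota(F)\) in the \(Z\)-decomposition: every coefficient attached to a blade containing \(z\) is zero. By uniqueness of the expansion (cited from \cite{Sommen1997,DeSchepperGuzmanAdanSommen2017}), it is the \(Z\)-decomposition of \(\iota(F)\).

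\textbf{Step 3 (the formulas agree).} Next I compare the scalar operators on \(A\in\A_x(Y)\). In \eqref{eq:relative-vector} for \(Z\), the extra summand \(2T^{(r)}_{q^2}z\,\delta^{(s_z)}_qA\) vanishes because \(A\) does not involve \(s_z\). The remaining terms are literally those for \(Y\), since \(\delta^{(r)}_{q^2}\), \(T^{(r)}_{q^2}\) and \(\delta^{(s_i)}_q\) act only on the variables present, and the \(c\)-variables are fixed. So \(\partial^{\mathrm{sc},Z}_{x,q}A=\iota\,\partial^{\mathrm{sc},Y}_{x,q}A\). In \eqref{eq:derivative-y}--\eqref{eq:derivative-x}, the factors \((-1)^p\), \(\qnum{m-p}\) and \(q^{m-p}\) depend only on \(p=|I|\), which is the same in both settings. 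Therefore
\[
\partial^{Z,\mathrm R}_{x,q}\circ\iota=\iota\circ\partial^{Y,\mathrm R}_{x,q}.
\]

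\textbf{Step 4 (direct limit).} Every element of \(\RB(S)\) involves finitely many generators, so \(\RB(S)\) is the union of the images of \(\RB(\{x\}\cup Y)\) over finite \(Y\subset S\setminus\{x\}\). Adjoining \(x\) if necessary is harmless, since the images for \(\{x\}\cup Y\) contain those for smaller generator sets. For \(F\in\RB(S)\) I set \(\dglob F:=\partial^{Y,\mathrm R}_{x,q}F\) for any admissible \(Y\). If \(Y\) and \(Y'\) are both admissible, Step 3 applied to \(Y\subset Y\cup Y'\supset Y'\) shows that the two values coincide. \(\B\)-linearity follows by choosing one common \(Y\) for finitely many elements. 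Relabelling invariance (Proposition~\ref{prop:relabelling}) ensures the result does not depend on any chosen ordering.

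The main obstacle I expect is Step 2. One has to be careful that the \(Z\)-normal form of an element of the smaller algebra really has vanishing coefficients on blades containing \(z\), with no hidden rewriting through \(s_z\) or \(c_{iz}\). Uniqueness of the exterior expansion settles this, but the injectivity of the embeddings from Step 1 must be established first, since uniqueness is stated only inside each finitely generated radial algebra.
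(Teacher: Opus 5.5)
Your proposal is correct and follows essentially the same route as the paper's proof. The $Z$-expansion of an element of the smaller algebra has no components containing the new vectors, and the Jackson difference in each new $s_z$ vanishes, so \eqref{eq:derivative-y}--\eqref{eq:derivative-x} give the same value; relabelling covariance then removes the ordering choices. Your retraction argument (sending the new generators to $0$) proves injectivity of $\RB(\{x\}\cup Y)\to\RB(\{x\}\cup Z)$, a point the paper uses without stating it.
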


\begin{proof}
Every element of \(\RB(S)\) involves only finitely many vector variables. Choose \(Y\) containing all variables different from \(x\) that occur in the element. If \(Y\) is enlarged to \(Z\), its exterior expansion has no components containing the new variables, and every Jackson difference with respect to a new mixed scalar vanishes. Hence \eqref{eq:derivative-y}--\eqref{eq:derivative-x} give the same value in the smaller and larger finite radial algebras. Proposition~\ref{prop:relabelling} shows that the resulting value is independent of all auxiliary choices.
\end{proof}

\newrev{The role of the classical limit in the later arguments is summarized in the following remark.}

\newrev{\begin{remark}[Dependence on the classical limit]\label{rem:dependency}
The construction of \(\dY\), relabelling covariance, and the direct-limit theorem are algebraic statements over \(\B\). Proposition~\ref{prop:classical-limit-order} is used to identify their meaning as a deformation of the standard right derivative. The triangular calculations of Section~4 use only Definition~\ref{def:finite-derivative}; they do not use a Clifford realization. The classical limit re-enters in Proposition~\ref{prop:det-nonzero}, where it supplies one nonsingular specialization of the right-multiplication Fischer operator.
\end{remark}}

\section{Exterior creation and a localized Green decomposition}

\newrev{This section studies the anticommutator of the derivative with exterior creation. Its scalar part is diagonal and the remaining term is triangular with respect to the two exterior sectors, which gives a Green inverse after localization. This construction is distinct from the right-multiplication Fischer decomposition considered in Section~5.}

For finite \(Y\), write
\[
 \Ycal_{x;Y}:=\bigoplus_I y_I\A_x(Y),
 \qquad
 \Xcal_{x;Y}:=\bigoplus_I (x\wedge y_I)\A_x(Y).
\]
Define exterior creation by
\begin{equation}\label{eq:creation}
 \Cop_x^Y(y_IA)=(x\wedge y_I)A,
 \qquad
 \Cop_x^Y((x\wedge y_I)B)=0.
\end{equation}
The operators \(\Cop_x^Y\) are compatible under enlargement of \(Y\), and hence induce a direct-limit operator \(\Cop_x\) on \(\RB(S)\). Set
\begin{equation}\label{eq:H}
 \Hcal^Y_{x,q}:=\dY\Cop_x^Y+\Cop_x^Y\dY.
\end{equation}

For every scalar variable \(u\), let \(M_u\) denote multiplication by \(u\). Define the scalar \(q\)-Euler operator
\begin{equation}\label{eq:q-euler}
 E^{\mathrm{sc},Y}_{x,q}
 =(1+q)M_r\delta^{(r)}_{q^2}
 +T^{(r)}_{q^2}\sum_{i=1}^NM_{s_i}\delta^{(s_i)}_q.
\end{equation}
\newrev{For exterior degree \(p\ge0\), put
\begin{equation}\label{eq:Lambda-p}
 \Lambda^Y_{p,q}
 :=\qnum{m-p}I+q^{m-p}E^{\mathrm{sc},Y}_{x,q},
\end{equation}
and
\begin{equation}\label{eq:Delta-p}
 \Delta^Y_{p,q}:=(-1)^p\Lambda^Y_{p,q}.
\end{equation}
The overall parity sign in \(\Delta^Y_{p,q}\) comes from the blade-lowering term in \eqref{eq:derivative-x}.}

On \(r^as^\beta P(c)\), where
\[
 s^\beta=s_1^{\beta_1}\cdots s_N^{\beta_N},
 \qquad
 P(c)\in\B[c_{ij}:1\le i\le j\le N],
\]
the scalar operator \(\Lambda^Y_{p,q}\) has eigenvalue
\begin{equation}\label{eq:lambda}
 \newrev{\lambda_{p,a,\beta}(q,Q)
 =\qnum{m-p}+q^{m-p}
 \left(\qnum{2a}+q^{2a}\sum_{i=1}^N\qnum{\beta_i}\right),}
\end{equation}
and hence \(\Delta^Y_{p,q}\) has eigenvalue
\begin{equation}\label{eq:mu}
 \newrev{\mu_{p,a,\beta}(q,Q)=(-1)^p\lambda_{p,a,\beta}(q,Q).}
\end{equation}

\newrev{To identify the diagonal block of the anticommutator, we need only the component of \((x\wedge y_I)\dscal(A)\) that loses its exterior \(x\). The following projection lemma isolates exactly that contribution.}

\begin{lemma}[Projection of the differentiated scalar term]\label{lem:no-x-projection}
Let \(I\subset\{1,\ldots,N\}\), \(p=|I|\), and \(A\in\A_x(Y)\). Then
\begin{equation}\label{eq:no-x-projection}
\operatorname{pr}_{\Ycal_{x;Y}}
\left((x\wedge y_I)\dscal(A)\right)
=(-1)^py_I E^{\mathrm{sc},Y}_{x,q}A.
\end{equation}
\end{lemma}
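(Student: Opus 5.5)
The plan is to expand \(\dscal(A)\) by \eqref{eq:relative-vector} and project each resulting term onto \(\Ycal_{x;Y}\) separately. Since all scalar coefficients are central, I write
\[
 (x\wedge y_I)\dscal(A)=(1+q)\,\bigl[(x\wedge y_I)x\bigr]\,\delta^{(r)}_{q^2}A
 +2\sum_{i=1}^N\bigl[(x\wedge y_I)y_i\bigr]\,T^{(r)}_{q^2}\delta^{(s_i)}_qA .
\]
Because \(\operatorname{pr}_{\Ycal_{x;Y}}\) is \(\A_x(Y)\)-linear by \eqref{eq:wedge-decomposition}, it suffices to prove two blade identities:
\[
 \operatorname{pr}_{\Ycal}\bigl((x\wedge y_I)x\bigr)=(-1)^p\,y_I\,r,
 \qquad
 \operatorname{pr}_{\Ycal}\bigl((x\wedge y_I)y_i\bigr)=(-1)^p\,y_I\,\tfrac{s_i}{2}.
\]
Substituting these gives \((-1)^py_I\bigl((1+q)r\delta^{(r)}_{q^2}A+T^{(r)}_{q^2}\sum_i s_i\delta^{(s_i)}_qA\bigr)\). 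Here \(M_{s_i}\) commutes with \(T^{(r)}_{q^2}\), since the dilation acts only on \(r\). The result is exactly \((-1)^py_IE^{\mathrm{sc},Y}_{x,q}A\) by \eqref{eq:q-euler}.

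For the blade identities, I first write \(x\wedge y_I=(-1)^p\,y_I\wedge x\). I then use the radial-algebra versions of the vector–blade product rules, with \(u\cdot v:=\tfrac12\{u,v\}\) central:
\[
 Bv=B\wedge v+B\lfloor v,\qquad
 B\lfloor v=\sum_k(-1)^{p-k}(y_{i_k}\cdot v)\,y_{I\setminus\{i_k\}}\ \text{ for } B=y_I .
\]
The first identity then follows from \((y_I\wedge x)x=y_I\,x^2-(y_I\lfloor x)x\). The term \((y_I\lfloor x)x\) equals \((y_I\lfloor x)\wedge x+(y_I\lfloor x)\lfloor x\). The wedge part lies in \(\Xcal_{x;Y}\). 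The double contraction vanishes, because it is a sum of \(\tfrac14 s_js_k\) against the antisymmetric pair \(\pm y_{I\setminus\{j,k\}}\).

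For the second identity I use \((y_I\wedge x)y_i=(y_I\wedge x)\wedge y_i+(y_I\wedge x)\lfloor y_i\). Expanding the contraction gives \((y_I\wedge x)\lfloor y_i=y_I\,(x\cdot y_i)-(y_I\lfloor y_i)\wedge x\), up to the sign convention I will fix. The only term free of an exterior \(x\) is \(y_I\,(x\cdot y_i)=y_I\,s_i/2\), while the other two terms lie in \(\Xcal_{x;Y}\).

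The main obstacle is justifying these wedge/contraction identities, with the correct signs, inside the abstract radial algebra rather than in a Clifford algebra. My first approach is to observe that both sides are polynomial identities in finitely many vectors. The exterior expansion in \(R(\{x\}\cup Y)\) is unique and is preserved by a faithful Clifford realization in sufficiently high dimension (Section~2). The identities can therefore be checked in a Clifford algebra of large dimension, where they are standard. As a fallback, I would prove them by induction on \(p\), using \(x\wedge B=\tfrac12(xB+(-1)^pBx)\) and centrality of the anticommutators \eqref{eq:radial-relation}. Finally, I would check the sign \((-1)^p\) against the case \(p=1\) of Example~\ref{ex:low-degree}. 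There \(w\,x\) has scalar-sector part \(-y\,r\), matching \((-1)^1y_Ir\) as required.
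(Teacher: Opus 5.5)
Your proposal is correct and follows essentially the same route as the paper: split \(\dscal(A)\) into its \(x\)- and \(y_j\)-components, use \(\A_x(Y)\)-linearity of the projection together with the two blade identities \(\operatorname{pr}_{\Ycal}\bigl((x\wedge y_I)x\bigr)=(-1)^p r\,y_I\) and \(\operatorname{pr}_{\Ycal}\bigl((x\wedge y_I)y_j\bigr)=(-1)^p\tfrac{s_j}{2}y_I\), and recombine into \(E^{\mathrm{sc},Y}_{x,q}\). Your explicit wedge/contraction expansions, including the vanishing double contraction, make rigorous the contraction-and-sign bookkeeping that the paper states informally; the only slip is a reference, since the check \(wx=\tfrac{s}{2}x-ry\) is recorded in \eqref{eq:two-vector-products} and Lemma~\ref{lem:Rx-formulas}, not in Example~\ref{ex:low-degree}.
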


\begin{proof}
Write
\[
\dscal(A)=x\alpha+\sum_{j=1}^Ny_j\beta_j,
\qquad
\alpha=(1+q)\delta^{(r)}_{q^2}A,
\qquad
\beta_j=2T^{(r)}_{q^2}\delta^{(s_j)}_qA.
\]
\newrev{We only need the component containing no exterior \(x\). Multiplication on the right by a vector can either increase exterior degree or contract one of the vectors already present. In \((x\wedge y_I)x\), the no-\(x\) component arises from contracting the final \(x\) with the distinguished \(x\); moving the final vector past the \(p\) vectors in \(y_I\) contributes \((-1)^p\), and the contraction contributes the central scalar \(r=x^2\). Thus
\[
 \operatorname{pr}_{\Ycal}\bigl((x\wedge y_I)x\bigr)=(-1)^pry_I.
\]
Similarly, in \((x\wedge y_I)y_j\), the only contribution to the same exterior blade \(y_I\) comes from contracting \(y_j\) with the distinguished \(x\), which yields \(s_j/2\); the same passage through \(p\) exterior factors gives
\[
 \operatorname{pr}_{\Ycal}\bigl((x\wedge y_I)y_j\bigr)
 =(-1)^p\frac{s_j}{2}y_I.
\]
Contractions with a vector in \(y_I\) lower the \(y\)-support and leave an exterior \(x\), while exterior products also remain in \(\Xcal_{x;Y}\); neither affects the displayed projection.}

Therefore
\[
\begin{aligned}
\operatorname{pr}_{\Ycal}
\left((x\wedge y_I)\dscal(A)\right)
&=(-1)^py_I\left(r\alpha+\frac12\sum_js_j\beta_j\right)\\
&=(-1)^py_IE^{\mathrm{sc},Y}_{x,q}A.
\end{aligned}
\]
\end{proof}

\newrev{By the projection lemma, the same scalar diagonal acts on both exterior sectors, while the remaining terms map the no-\(x\) sector into the \(x\)-sector. This gives the following block-triangular form.}

\begin{theorem}[Triangular exterior anticommutator]\label{thm:triangular-H}
Let \(I\subset\{1,\ldots,N\}\), \(p=|I|\), and \(A\in\A_x(Y)\). Then
\begin{align}
 \Hcal^Y_{x,q}((x\wedge y_I)A)
 &=(x\wedge y_I)\Delta^Y_{p,q}A,
 \label{eq:H-x}\\
 \Hcal^Y_{x,q}(y_IA)
 &=y_I\Delta^Y_{p,q}A+\mathcal N^Y_{I,q}(A),
 \label{eq:H-y}
\end{align}
where
\begin{equation}\label{eq:N-explicit}
\begin{aligned}
\mathcal N^Y_{I,q}(A)
&:=q^{m-p}(x\wedge y_I)\dscal(A)
 +\Cop_x^Y\bigl(y_I\dscal(A)\bigr)\\
&\quad-(-1)^pq^{m-p}y_IE^{\mathrm{sc},Y}_{x,q}A
\in\Xcal_{x;Y}.
\end{aligned}
\end{equation}
Let \(\mathcal N^Y_{x,q}:\Ycal_{x;Y}\to\Xcal_{x;Y}\) be determined by \(\mathcal N^Y_{x,q}(y_IA)=\mathcal N^Y_{I,q}(A)\). Hence, relative to \(\Ycal_{x;Y}\oplus\Xcal_{x;Y}\),
\begin{equation}\label{eq:H-block}
 \Hcal^Y_{x,q}=
 \begin{pmatrix}
 \bigoplus_I\Delta^Y_{|I|,q}&0\\
 \mathcal N^Y_{x,q}&\bigoplus_I\Delta^Y_{|I|,q}
 \end{pmatrix}.
\end{equation}
\end{theorem}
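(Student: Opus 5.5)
We compute \(\Hcal^Y_{x,q}=\dY\Cop_x^Y+\Cop_x^Y\dY\) directly on the two summands of the decomposition \eqref{eq:wedge-decomposition}, using only Definition~\ref{def:finite-derivative}, the definition \eqref{eq:creation} of \(\Cop_x^Y\), and Lemma~\ref{lem:no-x-projection}. The \(\Xcal\)-sector formula \eqref{eq:H-x} is treated first, then the \(\Ycal\)-sector formula \eqref{eq:H-y}. The block form \eqref{eq:H-block} then follows by reading off the components.

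\emph{The \(\Xcal\)-sector.} For \((x\wedge y_I)A\) we have \(\Cop_x^Y((x\wedge y_I)A)=0\), so only \(\Cop_x^Y\dY\) survives. By \eqref{eq:derivative-x},
\[
\dY((x\wedge y_I)A)=(-1)^p\qnum{m-p}y_IA+q^{m-p}(x\wedge y_I)\dscal(A).
\]
Apply \(\Cop_x^Y\), which kills the \(\Xcal\)-component and replaces \(y_J\) by \(x\wedge y_J\) on the \(\Ycal\)-component. The first term gives \((-1)^p\qnum{m-p}(x\wedge y_I)A\). For the second term, Lemma~\ref{lem:no-x-projection} says that its \(\Ycal\)-projection is \((-1)^py_IE^{\mathrm{sc},Y}_{x,q}A\). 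Hence \(\Cop_x^Y\) maps it to \((-1)^pq^{m-p}(x\wedge y_I)E^{\mathrm{sc},Y}_{x,q}A\). Summing the two contributions gives \((x\wedge y_I)(-1)^p\Lambda^Y_{p,q}A=(x\wedge y_I)\Delta^Y_{p,q}A\), which is \eqref{eq:H-x}.

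\emph{The \(\Ycal\)-sector.} For \(y_IA\), compute \(\dY\Cop_x^Y(y_IA)=\dY((x\wedge y_I)A)\) by the display above, and \(\Cop_x^Y\dY(y_IA)=\Cop_x^Y(y_I\dscal(A))\) by \eqref{eq:derivative-y}. Split \(q^{m-p}(x\wedge y_I)\dscal(A)\) into its \(\Ycal\)-projection and the remainder, again using Lemma~\ref{lem:no-x-projection}. Its \(\Ycal\)-part is \((-1)^pq^{m-p}y_IE^{\mathrm{sc},Y}_{x,q}A\), and together with \((-1)^p\qnum{m-p}y_IA\) this gives \(y_I\Delta^Y_{p,q}A\). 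Everything else is precisely \(\mathcal N^Y_{I,q}(A)\) as written in \eqref{eq:N-explicit}.

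\emph{Membership \(\mathcal N^Y_{I,q}(A)\in\Xcal_{x;Y}\).} The first and third terms of \eqref{eq:N-explicit} together equal \(q^{m-p}\) times \((1-\operatorname{pr}_{\Ycal})\) applied to \((x\wedge y_I)\dscal(A)\); this is immediate from the lemma. The middle term lies in the image of \(\Cop_x^Y\), which is contained in \(\Xcal\). One detail needs care: \(y_I\dscal(A)\) must be expanded in the normal form \eqref{eq:wedge-decomposition} before \(\Cop_x^Y\) is applied, since \(y_Ix\) and \(y_Iy_j\) contain contractions. This does not affect membership in \(\Xcal\), because \(\Cop_x^Y\) lands in \(\Xcal\) regardless.

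\emph{Main obstacle.} The only delicate point is the projection Lemma~\ref{lem:no-x-projection}, specifically its sign \((-1)^p\) and its claim that no other contraction produces an \(x\)-free component. Since that lemma is given, the remaining work is bookkeeping. One should check that the sign conventions match in the \(p=0\) case. For example, with \(Y=\{y\}\) and \(A=s\), we get \(\Hcal(x s)\) equal to \((x)(\qnum m+q^m)s\) on the \(\Xcal\)-part; this is consistent with \(E^{\mathrm{sc}}s=s\).
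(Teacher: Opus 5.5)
Your proposal is correct and follows the paper's proof essentially step for step. You evaluate \(\Hcal^Y_{x,q}\) on each sector using \eqref{eq:derivative-x}, \eqref{eq:derivative-y}, and Lemma~\ref{lem:no-x-projection}, read off the \(\Ycal\)-component as \(y_I\Delta^Y_{p,q}A\), and collect the remainder as \(\mathcal N^Y_{I,q}(A)\); your separate verification that \(\mathcal N^Y_{I,q}(A)\in\Xcal_{x;Y}\) (the first and third terms form \(q^{m-p}(1-\operatorname{pr}_{\Ycal})\) applied to \((x\wedge y_I)\dscal(A)\), and the middle term lies in \(\im\Cop_x^Y\)) only spells out what the paper states in one line.
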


\begin{proof}
For an \(x\)-sector element, \(\Cop_x^Y\) vanishes and
\[
\Hcal^Y_{x,q}((x\wedge y_I)A)
=\Cop_x^Y\dY((x\wedge y_I)A).
\]
By formula \eqref{eq:derivative-x},
\[
\newrev{\dY((x\wedge y_I)A)
=(-1)^p\qnum{m-p}y_IA+q^{m-p}(x\wedge y_I)\dscal(A).}
\]
Exterior creation sends the first term to
\((-1)^p\qnum{m-p}(x\wedge y_I)A\). For the second term, \(\Cop_x^Y\) first discards the \(x\)-sector component and then adjoins exterior \(x\). Lemma~\ref{lem:no-x-projection} therefore contributes
\[
(-1)^pq^{m-p}(x\wedge y_I)E^{\mathrm{sc},Y}_{x,q}A.
\]
Adding the two pieces gives \((x\wedge y_I)\Delta^Y_{p,q}A\), proving \eqref{eq:H-x}.

For \(y_IA\),
\[
\begin{aligned}
\Hcal^Y_{x,q}(y_IA)
&=\dY((x\wedge y_I)A)+\Cop_x^Y(y_I\dscal(A))\\
&=\newrev{(-1)^p\qnum{m-p}y_IA}
 +q^{m-p}(x\wedge y_I)\dscal(A)
 +\Cop_x^Y(y_I\dscal(A)).
\end{aligned}
\]
Lemma~\ref{lem:no-x-projection} identifies the \(\Ycal\)-component of the middle term as \((-1)^pq^{m-p}y_IE^{\mathrm{sc},Y}_{x,q}A\). Hence the full \(\Ycal\)-component is exactly \(y_I\Delta^Y_{p,q}A\), and the remainder is \eqref{eq:N-explicit} in \(\Xcal_{x;Y}\). This proves \eqref{eq:H-y} and the block form.
\end{proof}

\newrev{Before constructing the inverse, consider the case of one auxiliary vector. It displays the diagonal factors in the first nontrivial exterior degrees.}

\newrev{\begin{example}[One auxiliary vector]\label{ex:H-one-y}
Let \(Y=\{y\}\), \(w=x\wedge y\), and \(A=r^as^bP(t)\). On exterior degree \(p=1\),
\[
 \Delta^Y_{1,q}A
 =-\left(\qnum{m-1}+q^{m-1}\bigl(\qnum{2a}+q^{2a}\qnum b\bigr)\right)A.
\]
In particular,
\[
 \Hcal^{\{y\}}_{x,q}(y)=-\qnum{m-1}y,
 \qquad
 \Hcal^{\{y\}}_{x,q}(w)=-\qnum{m-1}w.
\]
After \(Q=q^M\) and \(q\to1\), both diagonal coefficients tend to \(-(M-1)\), consistently with the right-action sign in Example~\ref{ex:low-degree}.
\end{example}}

Let \(\Omega_{x;Y}\subset\B\) be the multiplicative set generated by all nonzero formal factors \(\lambda_{p,a,\beta}(q,Q)\) from \eqref{eq:lambda}. \newrev{Since \(\mu_{p,a,\beta}=(-1)^p\lambda_{p,a,\beta}\), localizing at \(\lambda\) or at \(\mu\) is equivalent.} Define the graded scalar Green operator by
\begin{equation}\label{eq:graded-green}
G^{\mathrm{gr},Y}_{x,q}
\bigl(y_Ir^as^\beta P(c)\bigr)
=\newrev{\mu_{|I|,a,\beta}(q,Q)^{-1}}y_Ir^as^\beta P(c),
\end{equation}
with the same formula on \((x\wedge y_I)r^as^\beta P(c)\).

\newrev{The Green inverse is constructed over the generic coefficient ring, so one must distinguish invertibility there from invertibility after fixing \(M\) and \(q\). The following remark makes that distinction explicit before the inverse is stated.}

\newrev{\begin{remark}[Formal localization versus numerical specialization]\label{rem:formal-specialization}
The localization above is a statement over the generic coefficient ring with independent \(q\) and \(Q\). After a numerical dimension specialization \(Q=q^M\), a formally nonzero denominator may specialize to zero. Therefore a specialized Green operator is defined only on blocks for which the corresponding numerical factors are nonzero. For \(0<q<1\) and \(M>p\), every term in
\[
 \lambda_{p,a,\beta}(q,q^M)=\qnum{M-p}+q^{M-p}
 \left(\qnum{2a}+q^{2a}\sum_i\qnum{\beta_i}\right)
\]
is nonnegative and the first is strictly positive, so the factor cannot vanish. If \(M=p\), the factor vanishes precisely in scalar degree \(a=0\), \(\beta=0\). For \(p>M\), cancellation can occur. This distinction is why the formal localized theorem and its specialized versions are stated separately.
\end{remark}}

\newrev{After adjoining inverses of the displayed diagonal factors, the triangular block matrix can be inverted by an elementary ordered block calculation. This yields the finite Green operator and, by compatibility, its direct-limit version.}

\begin{theorem}[Localized exterior Green operator]\label{thm:green}
The operator \(\Hcal^Y_{x,q}\) is invertible on
\(\RB(\{x\}\cup Y)_{\Omega_{x;Y}}\). Its inverse is
\begin{equation}\label{eq:green-matrix}
 \Gcal^Y_{x,q}=
 \begin{pmatrix}
 G^{\mathrm{gr},Y}_{x,q}&0\\
 -G^{\mathrm{gr},Y}_{x,q}\mathcal N^Y_{x,q}G^{\mathrm{gr},Y}_{x,q}
 &G^{\mathrm{gr},Y}_{x,q}
 \end{pmatrix}.
\end{equation}
The finite inverses are compatible under enlargement of \(Y\). Consequently, if \(\Omega_x\) is generated by all finite diagonal factors, they induce a direct-limit inverse \(\Gcal_{x,q}\) of
\[
 \Hcal_{x,q}:=\dglob\Cop_x+\Cop_x\dglob
\]
on \(\RB(S)_{\Omega_x}\).
\end{theorem}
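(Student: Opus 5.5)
The plan is to use the block-triangular form \eqref{eq:H-block} of Theorem~\ref{thm:triangular-H} and to invert it blockwise. The first task is to make sense of \(G^{\mathrm{gr},Y}_{x,q}\) after localization. By \eqref{eq:lambda}, the diagonal operator \(\bigoplus_I\Delta^Y_{|I|,q}\) acts on the \(\B\)-basis of \(\A_x(Y)\)-monomials \(y_Ir^as^\beta P(c)\), and the same on \((x\wedge y_I)r^as^\beta P(c)\), by the central scalar \(\mu_{|I|,a,\beta}\). Each \(\lambda_{p,a,\beta}\) is formally nonzero in \(\B\): setting \(Q=0\) gives \(\lambda=1/(1-q)\ne0\). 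Hence every \(\lambda_{p,a,\beta}\) lies in \(\Omega_{x;Y}\), and \(G^{\mathrm{gr},Y}_{x,q}\) is a well-defined \(\B_{\Omega}\)-linear operator on the localized module.

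Since the coefficients are central and scalar, \(G^{\mathrm{gr}}\) is a two-sided inverse of the diagonal blocks. This rests on two observations. First, the Jackson operators preserve the monomial basis up to lowering degree. Second, the \(c_{ij}\) are untouched, so the diagonal is literally a scalar on each graded piece.

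Next I will verify the inverse by direct block multiplication. Write \(D:=\bigoplus_I\Delta^Y_{|I|,q}\) and \(\mathcal N:=\mathcal N^Y_{x,q}\). Then
\[
\begin{pmatrix}D&0\\ \mathcal N&D\end{pmatrix}\begin{pmatrix}G&0\\ -G\mathcal NG&G\end{pmatrix}
=\begin{pmatrix}DG&0\\ \mathcal NG-DG\mathcal NG&DG\end{pmatrix}=I,
\]
using \(DG=GD=I\). The multiplication in the other order is symmetric. The only subtlety is that \(\mathcal N\) must map \(\Ycal\) into \(\Xcal\), which Theorem~\ref{thm:triangular-H} already guarantees. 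It must also be \(\B\)-linear, so that it extends to the localization, and this is automatic because localization is at central scalars. No growth or convergence issue arises, since \(\mathcal N\) is applied only once; the matrix is strictly \(2\times2\) block lower triangular.

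For compatibility under \(Y\subset Z\), take an element supported on the variables in \(Y\). Its exterior and scalar expansion relative to \(Z\) coincides with that relative to \(Y\): all new \(\beta_i=0\), and no new blades appear. By \eqref{eq:lambda}, the eigenvalue \(\lambda_{p,a,\beta}\) is unchanged when zero entries are appended to \(\beta\), since \(\qnum0=0\). Moreover, \(\Omega_{x;Y}\subset\Omega_{x;Z}\). So \(G^{\mathrm{gr},Z}\) restricts to \(G^{\mathrm{gr},Y}\).

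Likewise, \(\mathcal N^Z\) restricts to \(\mathcal N^Y\). This follows because \(\dY\) and \(\Cop_x^Y\) are compatible, by Theorem~\ref{thm:direct-limit-derivative} and the remark after \eqref{eq:creation}, and \(E^{\mathrm{sc}}\) kills nothing new on such elements. Hence \(\Gcal^Z\) restricts to \(\Gcal^Y\).

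Setting \(\Omega_x=\bigcup_Y\Omega_{x;Y}\), the finite inverses then glue to \(\Gcal_{x,q}\) on \(\RB(S)_{\Omega_x}\), because each element and each denominator involves finitely many variables. The identities \(\Hcal\Gcal=\Gcal\Hcal=I\) hold since they can be checked inside some finite \(Y\).

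The main obstacle I anticipate is the compatibility of \(\mathcal N\). Its formula \eqref{eq:N-explicit} involves the projection of \((x\wedge y_I)\dscal(A)\), and one must confirm that enlarging \(Y\) introduces no extra contractions. I would handle this by noting that \(\mathcal N=\Hcal-D\) on \(\Ycal\) and deducing its compatibility from that of \(\Hcal\) and \(D\). This avoids recomputing the contraction terms.
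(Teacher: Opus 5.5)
Your proposal is correct and follows the paper's proof: you invert the lower block-triangular form \eqref{eq:H-block} with \(G=\Delta^{-1}\), verify the product in both orders without commuting \(\mathcal N\) past \(G\), and get compatibility by restricting to the subalgebra generated by \(x\) and \(Y\). You also fill in two details the paper leaves implicit: that every \(\lambda_{p,a,\beta}\) is nonzero in \(\B\) (setting \(Q=0\) gives \(1/(1-q)\)), and that compatibility of \(\mathcal N^Y_{x,q}\) follows from \(\mathcal N^Y_{x,q}=\Hcal^Y_{x,q}-\bigoplus_I\Delta^Y_{|I|,q}\) on \(\Ycal_{x;Y}\).
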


\begin{proof}
Put \(\Delta=\bigoplus_I\Delta^Y_{|I|,q}\), \(N=\mathcal N^Y_{x,q}\), and \(G=G^{\mathrm{gr},Y}_{x,q}=\Delta^{-1}\). Then
\[
\begin{pmatrix}\Delta&0\\N&\Delta\end{pmatrix}
\begin{pmatrix}G&0\\-GNG&G\end{pmatrix}
=\begin{pmatrix}I&0\\NG-\Delta GNG&I\end{pmatrix}=I,
\]
and in the reverse order,
\[
\begin{pmatrix}G&0\\-GNG&G\end{pmatrix}
\begin{pmatrix}\Delta&0\\N&\Delta\end{pmatrix}
=\begin{pmatrix}I&0\\-GNG\Delta+GN&I\end{pmatrix}=I.
\]
\newrev{No commutation between \(N\) and \(G\) is being assumed: the order in \(-GNG\) is exactly the one required by block-matrix inversion.} This verifies the finite inverse explicitly. If \(Y\subset Z\), the subalgebra generated by \(x\) and \(Y\) is invariant under both finite anticommutators, their restrictions agree, and both diagonal inverses act by the same factors on monomials supported in \(Y\). Therefore the Green inverses agree on the smaller subalgebra after localization by the union of the finite denominator sets.
\end{proof}

\newrev{Expanding the identity \(\Hcal_{x,q}\Gcal_{x,q}=I\) separates every element into its two Green components and produces complementary projections.}

\begin{corollary}[Exterior Green decomposition]\label{cor:exterior-fischer}
On \(\RB(S)_{\Omega_x}\),
\begin{equation}\label{eq:green-identity}
 F=\dglob(\Cop_x\Gcal_{x,q}F)+\Cop_x(\dglob\Gcal_{x,q}F).
\end{equation}
The operators
\begin{equation}\label{eq:green-projectors}
 P_{\partial}:=\dglob\Cop_x\Gcal_{x,q},
 \qquad
 P_{\Cop}:=\Cop_x\dglob\Gcal_{x,q}
\end{equation}
are complementary projections. Hence
\begin{equation}\label{eq:exterior-decomp}
 \RB(S)_{\Omega_x}=\im P_{\partial}\oplus\im P_{\Cop},
 \qquad
 \im P_{\partial}\subseteq\im\dglob,
 \quad
 \im P_{\Cop}\subseteq\im\Cop_x.
\end{equation}
\end{corollary}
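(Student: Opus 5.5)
The plan is to read everything off the identity \(\Hcal_{x,q}\Gcal_{x,q}=I\) from Theorem~\ref{thm:green}, and to obtain idempotence from the nilpotency \(\Cop_x^2=0\) together with the fact that \(\Gcal_{x,q}\) commutes with \(\Cop_x\). Since \(\Gcal_{x,q}\) is a two-sided inverse of \(\Hcal_{x,q}=\dglob\Cop_x+\Cop_x\dglob\) on \(\RB(S)_{\Omega_x}\), applying \(\Hcal_{x,q}\Gcal_{x,q}=I\) to \(F\) gives \eqref{eq:green-identity} immediately. It also gives \(P_\partial+P_{\Cop}=I\). The image inclusions in \eqref{eq:exterior-decomp} are evident from the definitions. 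Throughout, \(\dglob\) and \(\Cop_x\) are extended \(\B\)-linearly to the localization, which is legitimate because \(\Omega_x\) consists of central coefficients.

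For the projection property it suffices to show \(P_{\Cop}^2=P_{\Cop}\), since then \(P_\partial=I-P_{\Cop}\) is automatically an idempotent complementary to it. I would work at a finite level \(Y\) and pass to the direct limit using the compatibility statements in Theorems~\ref{thm:direct-limit-derivative} and~\ref{thm:green}. The key steps are as follows.

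\emph{Step 1.} Note that \(\Cop_x^2=0\) by \eqref{eq:creation}.

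\emph{Step 2.} Deduce that \(\Cop_x\) commutes with \(\Hcal_{x,q}\): both \(\Cop_x\Hcal_{x,q}\) and \(\Hcal_{x,q}\Cop_x\) equal \(\Cop_x\dglob\Cop_x\).

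\emph{Step 3.} Conclude that \(\Cop_x\) commutes with the inverse \(\Gcal_{x,q}\). Indeed, \(\Gcal\Cop=\Gcal\Cop\Hcal\Gcal=\Gcal\Hcal\Cop\Gcal=\Cop\Gcal\).

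\emph{Step 4.} Compute, writing \(P_{\Cop}=\Cop\dglob\Gcal\),
\[
 P_{\Cop}^2=\Cop\dglob\Gcal\Cop\dglob\Gcal=\Cop\dglob\Cop\Gcal\dglob\Gcal .
\]
Then replace \(\Cop\dglob\Cop\) by \(\Cop\Hcal\), using \(\Cop^2=0\), to get
\[
 P_{\Cop}^2=\Cop\Hcal\Gcal\dglob\Gcal=\Cop\dglob\Gcal=P_{\Cop}.
\]

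\emph{Step 5.} Observe that the same trick gives \(P_\partial P_{\Cop}=P_{\Cop}P_\partial=0\), since \(\Cop\) produces a factor \(\Cop^2\). So the projections are complementary, and the direct sum \(\im P_\partial\oplus\im P_{\Cop}\) follows formally.

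The step I expect to need most care is Step 3, the commutation of \(\Gcal_{x,q}\) with \(\Cop_x\) on the localized module. This uses two-sidedness of the inverse, which Theorem~\ref{thm:green} verifies explicitly in both orders. It also uses that \(\Cop_x\) respects the localization, i.e.\ is \(\B\)-linear. One can cross-check against the block form \eqref{eq:green-matrix}: \(\Cop_x\) is the block \(\begin{pmatrix}0&0\\ I&0\end{pmatrix}\) (shifting \(y_IA\) to \((x\wedge y_I)A\) with the same index labels and the same scalar data). The two diagonal blocks of \(\Gcal\) are the same \(G^{\mathrm{gr}}\), acting by identical factors \(\mu_{|I|,a,\beta}^{-1}\) on matching labels. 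Hence \(\Cop\Gcal\) and \(\Gcal\Cop\) both equal \(\begin{pmatrix}0&0\\ G^{\mathrm{gr}}&0\end{pmatrix}\), confirming the abstract argument directly.
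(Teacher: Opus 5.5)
Your proof is correct, and it takes a different route from the paper.

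\textbf{The paper's route.} The paper works on a finite subalgebra and writes $\dY=\begin{pmatrix}A&\Delta\\ C&D\end{pmatrix}$ and $\Cop_x^Y=\begin{pmatrix}0&0\\ I&0\end{pmatrix}$ relative to $\Ycal_{x;Y}\oplus\Xcal_{x;Y}$. It identifies the lower-left block of $\Hcal^Y_{x,q}$ as $A+D$. Substituting the explicit inverse \eqref{eq:green-matrix} then gives $P_{\partial}=\begin{pmatrix}I&0\\ D\Delta^{-1}&0\end{pmatrix}$ and $P_{\Cop}=\begin{pmatrix}0&0\\ -D\Delta^{-1}&I\end{pmatrix}$. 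Idempotence and mutual annihilation are read off these matrices.

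\textbf{Your route.} You use only $\Cop_x^2=0$ and the two-sidedness of $\Gcal_{x,q}$. From these, $\Cop_x$ commutes with $\Hcal_{x,q}$ and hence with $\Gcal_{x,q}$. The identity $\Cop_x\dglob\Cop_x=\Cop_x\Hcal_{x,q}$ then collapses $P_{\Cop}^2$ to $P_{\Cop}$. This is the standard homotopy-operator argument.

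\textbf{Comparison.} Your argument is shorter and works directly at the global level. It does not use the triangular structure, the identification $\mathcal N^Y_{x,q}=A+D$, or the shape of $\Gcal_{x,q}$. It would apply to any square-zero operator whose anticommutator with $\dglob$ is invertible, and it does not need $\dglob^2=0$, which fails here. The paper's computation instead yields explicit matrices: $\im P_{\partial}$ is the graph of $D\Delta^{-1}$ over $\Ycal_{x;Y}$, and $P_{\Cop}$ is the identity on the localized $x$-sector.

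\textbf{A small imprecision in Step 5.} Only $P_{\partial}P_{\Cop}=\dglob\Cop_x\Cop_x\Gcal_{x,q}\dglob\Gcal_{x,q}$ literally contains $\Cop_x^2$. The product $P_{\Cop}P_{\partial}$ has no such factor. Its vanishing still follows at once from $P_{\Cop}P_{\partial}=P_{\Cop}(I-P_{\Cop})=0$, which you already have from Step 4 and $P_{\partial}+P_{\Cop}=I$.
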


\begin{proof}
Equation \eqref{eq:green-identity} is the identity
\(I=\Hcal_{x,q}\Gcal_{x,q}\) expanded using \eqref{eq:H}. On a finite subalgebra, write
\[
 \dY=\begin{pmatrix}A&\Delta\\ C&D\end{pmatrix},
 \qquad
 \Cop_x^Y=\begin{pmatrix}0&0\\ I&0\end{pmatrix}
\]
relative to \(\Ycal_{x;Y}\oplus\Xcal_{x;Y}\), where
\[
\begin{aligned}
A&=\operatorname{pr}_{\Ycal}\dY\bigm|_{\Ycal},&
\Delta&=\operatorname{pr}_{\Ycal}\dY\bigm|_{\Xcal}
 =\bigoplus_I\Delta^Y_{|I|,q},\\
C&=\operatorname{pr}_{\Xcal}\dY\bigm|_{\Ycal},&
D&=\operatorname{pr}_{\Xcal}\dY\bigm|_{\Xcal}.
\end{aligned}
\]
Since \(\Hcal^Y_{x,q}=\dY\Cop_x^Y+\Cop_x^Y\dY\), its lower-left block is \(\mathcal N^Y_{x,q}=A+D\). Substitution of \eqref{eq:green-matrix} gives
\[
 P_{\partial}=
 \begin{pmatrix}I&0\\D\Delta^{-1}&0\end{pmatrix},
 \qquad
 P_{\Cop}=
 \begin{pmatrix}0&0\\-D\Delta^{-1}&I\end{pmatrix}.
\]
These matrices are idempotent, have zero product in both orders, and sum to the identity. Compatibility under finite enlargement gives the global statement.
\end{proof}

\section{Determinant-localized right \texorpdfstring{\(q\)}{q}-monogenic Fischer decomposition}

\newrev{This section develops the right \(q\)-monogenic Fischer decomposition. We first write right multiplication by \(x\) in the exterior normal form, then prove generic invertibility of \(\partial^{Y,\mathrm R}_{x,q}R_x\), and finally obtain finite and global decompositions after determinant localization.}

\newrev{The exterior creation operator \(\Cop_x^Y\) of Section~4 is not full multiplication by \(x\). For the right Fischer decomposition we use full right multiplication and set}
\begin{equation}\label{eq:Rx}
 \newrev{R_x:F\longmapsto Fx.}
\end{equation}
For \(I=\{i_1<\cdots<i_p\}\), define contraction by
\begin{equation}\label{eq:iota}
 \iota_x(y_I)=\sum_{\nu=1}^{p}(-1)^{\nu-1}\frac{s_{i_\nu}}2
 y_{I\setminus\{i_\nu\}}.
\end{equation}

\newrev{The determinant computations of Section~6 use the exterior normal basis, so we first express right multiplication by \(x\) in that basis.}

\begin{lemma}[Right multiplication in exterior normal form]\newrev{\label{lem:Rx-formulas}}
\newrev{For \(p=|I|\) and \(A\in\A_x(Y)\),
\begin{align}
 R_x(y_IA)&=(-1)^p\bigl((x\wedge y_I)A-\iota_x(y_I)A\bigr),\label{eq:Rx-y}\\
 R_x((x\wedge y_I)A)&=(-1)^p\bigl(r y_IA-(x\wedge\iota_x(y_I))A\bigr).
 \label{eq:Rx-x}
\end{align}}
\end{lemma}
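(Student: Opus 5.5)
The plan is to reduce both formulas to the case \(A=1\), then derive them from a left-multiplication identity in the radial algebra together with the antisymmetrization formula \(x\wedge A_p=\tfrac12\bigl(xA_p+(-1)^pA_px\bigr)\) already used in the proof of Lemma~\ref{lem:classical-blade}. Since \(A\in\A_x(Y)\) is central, \(R_x(y_IA)=(y_Ix)A\) and \(R_x((x\wedge y_I)A)=((x\wedge y_I)x)A\). So it suffices to prove \eqref{eq:Rx-y}--\eqref{eq:Rx-x} for \(A=1\).

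\textbf{Step 1 (left Clifford rule).} The key auxiliary identity is
\[
 x\,y_I=x\wedge y_I+\iota_x(y_I),
\]
with \(\iota_x\) as in \eqref{eq:iota}. I would prove it by induction on \(p=|I|\), writing \(y_I=y_{i_1}\wedge y_{I'}\) in the antisymmetrized normal form. The induction step moves \(x\) past \(y_{i_1}\) using \(xy_{i_1}=-y_{i_1}x+s_{i_1}\), and uses that the \(s_i\) and \(c_{ij}\) are central by \eqref{eq:radial-relation}. The signs \((-1)^{\nu-1}\) come from the position of the contracted factor.

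This is the only step with real bookkeeping, and I expect it to be the main obstacle. The difficulty is checking that the symmetric parts \(c_{ij}/2\) produced when reordering \(y\)'s cancel in the antisymmetrized product. I would handle this by working directly with the standard fact, cited from \cite{Sommen1997,DeSchepperGuzmanAdanSommen2017}, that a vector acts on a wedge of vectors by ``wedge plus contraction''. As a check, the case \(p=1\) must reproduce \(xy=w+s/2\) from Example~\ref{ex:low-degree}.

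\textbf{Step 2 (formula \eqref{eq:Rx-y}).} Solve the antisymmetrization formula for \(y_Ix\):
\[
 y_Ix=(-1)^p\bigl(2\,x\wedge y_I-xy_I\bigr).
\]
Substituting Step 1 gives \((-1)^p\bigl(x\wedge y_I-\iota_x(y_I)\bigr)\). The same computation applies to any exterior \(k\)-vector in the \(y\)'s, which I will reuse in Step 3.

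\textbf{Step 3 (formula \eqref{eq:Rx-x}).} Write \(x\wedge y_I=xy_I-\iota_x(y_I)\) by Step 1. For the first term, \((xy_I)x=x(y_Ix)\), and by Step 2 this equals \((-1)^p\bigl(x(x\wedge y_I)-x\,\iota_x(y_I)\bigr)\). Then evaluate the pieces:
\begin{itemize}
\item \(x(x\wedge y_I)=r\,y_I-x\wedge\iota_x(y_I)\). This follows from Step 1 applied to the blade \(x\wedge y_I\), using \(x\wedge x=0\) and contraction of \(x\) with \(x\), which gives \(\{x,x\}/2=r\).
\item \(x\,\iota_x(y_I)=x\wedge\iota_x(y_I)\). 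Here \(\iota_x\iota_x=0\), because the central \(s_i\) commute while the signs are antisymmetric.
\end{itemize}
For the second term, Step 2 applied to the \((p-1)\)-vector \(\iota_x(y_I)\) gives
\[
 \iota_x(y_I)\,x=(-1)^{p-1}x\wedge\iota_x(y_I).
\]
Collecting terms, the three copies of \(x\wedge\iota_x(y_I)\) combine with net coefficient \(-(-1)^p\), which yields \((-1)^p\bigl(ry_I-x\wedge\iota_x(y_I)\bigr)\). Finally, multiply by the central \(A\) on the right to obtain the general statement.
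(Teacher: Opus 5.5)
Your proposal is correct and follows essentially the paper's route. Both arguments start from the left vector--blade rule \(xy_I=x\wedge y_I+\iota_x(y_I)\) and convert it into a blade--vector rule. You do the conversion via the antisymmetrization identity \(x\wedge A_p=\tfrac12\bigl(xA_p+(-1)^pA_px\bigr)\); this identity holds in the radial algebra by the same central-anticommutator argument, not only in a Clifford realization. The paper instead commutes the final \(x\) through the blade.

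Your Step~3 is correct but longer than necessary. The same identity applied to the \((p+1)\)-blade \(x\wedge y_I\), where \(x\wedge x\wedge y_I=0\), gives at once \((x\wedge y_I)x=(-1)^p x(x\wedge y_I)=(-1)^p\bigl(ry_I-x\wedge\iota_x(y_I)\bigr)\). This is in substance the paper's argument for \eqref{eq:Rx-x}.
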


\begin{proof}
\newrev{The scalar coefficient \(A\) is central, so it is enough to work with the exterior factors. The familiar vector--blade decomposition in the radial algebra is
\[
 x y_I=x\wedge y_I+\iota_x(y_I).
\]
The corresponding blade--vector formula is obtained by moving the final \(x\) through a homogeneous exterior \(p\)-blade. The exterior part acquires \((-1)^p\), while each contraction term acquires the opposite relative sign; hence
\[
 y_Ix=(-1)^p\bigl(x\wedge y_I-\iota_x(y_I)\bigr),
\]
which is \eqref{eq:Rx-y}. These formulas follow directly from the central relations \(y_jx=s_j-xy_j\) and antisymmetrization. For example,
\[
 yx=-x\wedge y+\frac{s}{2},
\]
and, for \(I=\{1,2\}\),
\[
 (y_1\wedge y_2)x
 =x\wedge y_1\wedge y_2
 -\frac{s_1}{2}y_2+\frac{s_2}{2}y_1,
\]
which displays the same pattern.}

\newrev{For the second formula, apply the same blade--vector decomposition to the \((p+1)\)-blade \(x\wedge y_I\). Its exterior product with the final \(x\) vanishes because \(x\wedge x=0\). The contraction with the distinguished \(x\) gives \((-1)^p r y_I\), while contractions with the vectors in \(y_I\) give \((-1)^{p+1}x\wedge\iota_x(y_I)\). Thus
\[
 (x\wedge y_I)x=(-1)^p\bigl(r y_I-x\wedge\iota_x(y_I)\bigr),
\]
which is \eqref{eq:Rx-x}. For one auxiliary vector, with \(w=x\wedge y\), this gives
\[
 yx=-w+\frac{s}{2},
 \qquad
 wx=\frac{s}{2}x-r y,
\]
in agreement with the two formulas above.}
\end{proof}

Give \(\RB(\{x\}\cup Y)\) the \(x\)-degree
\begin{equation}\label{eq:x-degree}
 \deg_x r=2,
 \quad \deg_x s_i=1,
 \quad \deg_x c_{ij}=0,
 \quad \deg_x y_I=0,
 \quad \deg_x(x\wedge y_I)=1.
\end{equation}
Let \(H^Y_n\) be the homogeneous component of degree \(n\), and put
\begin{equation}\label{eq:TY}
 \T_Y:=\B[c_{ij}:1\le i\le j\le N].
\end{equation}
Then \(H^Y_n\) is a finite free \(\T_Y\)-module with basis
\begin{equation}\label{eq:H-basis}
 y_I r^as^\beta\quad (2a+|\beta|=n),
 \qquad
 (x\wedge y_I)r^as^\beta\quad (1+2a+|\beta|=n).
\end{equation}
The formulas above imply
\[
 \newrev{R_xH^Y_n\subseteq H^Y_{n+1}},
 \qquad
 \dY H^Y_{n+1}\subseteq H^Y_n.
\]
An element \(F\in H^Y_n\) is called \emph{right \(q\)-monogenic relative to \(x\) and \(Y\)} if \(\dY F=0\). Likewise, an element of \(\RB(S)\) is right \(q\)-monogenic if it belongs to the kernel of the direct-limit operator \(\dglob\). Since \(\dY\) and \(R_x\) are \(\T_Y\)-linear, they extend uniquely to every central localization of \(\T_Y\).

Define
\begin{equation}\label{eq:K-D}
 \newrev{\Kop^Y_k:=\dY R_x\bigm|_{H^Y_k}:H^Y_k\to H^Y_k,}
 \qquad
 D^Y_k:=\det\Kop^Y_k\in\T_Y.
\end{equation}

\newrev{The Fischer decomposition requires injectivity of right multiplication. We record this property before considering the homogeneous determinants.}

\begin{lemma}[Injectivity of right multiplication]\newrev{\label{lem:Rx-injective}}
\newrev{The map \(R_x\) is injective on every finite radial algebra over \(\B\), and remains injective after central localization. It also remains injective after the numerical specializations \(Q=q^M\), \(q=q_0\in(0,1)\) used below.}
\end{lemma}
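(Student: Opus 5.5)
The simplest route is to exhibit a left inverse of \(R_x\) up to a central nonzero-divisor. Since \(r=x^2\) is central, \(R_x^2F=Fx^2=rF\), so \(R_x\circ R_x=M_r\), multiplication by \(r\). It therefore suffices to show that multiplication by \(r\) is injective. Indeed, if \(Fx=0\), then \(rF=(Fx)x=0\), and injectivity of \(M_r\) gives \(F=0\).

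To show that \(M_r\) is injective, I would use the exterior decomposition \eqref{eq:wedge-decomposition}. Every \(F\in\RB(\{x\}\cup Y)\) can be written as \(\sum_I y_IA_I+\sum_I(x\wedge y_I)B_I\) with coefficients in \(\A_x(Y)\), and this expansion is unique. Because \(r\) is central and lies in \(\A_x(Y)\), multiplication by \(r\) acts coefficientwise, \(A_I\mapsto rA_I\) and \(B_I\mapsto rB_I\). The ring \(\A_x(Y)=\B[r,s_i,c_{ij}]\) is a polynomial ring over the domain \(\B\), with \(r\) one of the free variables; here I rely on the uniqueness of the normal form, which says these scalar invariants are algebraically independent. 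Hence \(r\) is a nonzero-divisor, \(M_r\) is injective, and so is \(R_x\). For the direct-limit algebra \(\RB(S)\), the same argument applies, because any element lies in some finite subalgebra.

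For central localization at a multiplicative set \(\Omega\subset\B\) or \(\subset\T_Y\), the free-module structure over \(\A_x(Y)\) is preserved and \(r\) remains a nonzero-divisor in \(\Omega^{-1}\A_x(Y)\), since localizing a domain gives a domain. The identity \(R_x^2=M_r\) also survives localization, so the argument goes through unchanged.

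For the numerical specializations \(Q=q^M\), \(q=q_0\), the coefficient ring becomes \(\R\) (or \(\R(q)\)). The relevant scalar algebra is then \(\R[r,s_i,c_{ij}]\), still a polynomial ring in which \(r\) is a variable, and the exterior normal form is unaffected because it does not involve \(q\) or \(Q\); only the operator \(\dY\) depends on them. So \(r\) is again a nonzero-divisor. The one point to state carefully is that specialization is applied to coefficients only: the module stays free over the specialized scalar ring with the same exterior basis. Once that is made explicit, the argument above applies verbatim.
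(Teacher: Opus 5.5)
Your proposal is correct and is essentially the paper's own proof: from \(Fx=0\) you get \(rF=(Fx)x=0\), and the exterior normal form makes the finite radial algebra a free module over the polynomial ring \(\A_x(Y)\), in which \(r\) is a nonzero-divisor. The same reasoning then carries over unchanged to central localizations and to the numerical specializations, and your two explicit remarks (that uniqueness of the normal form encodes algebraic independence of the invariants, and that specialization touches only the coefficients) state points the paper leaves implicit.
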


\begin{proof}
\newrev{If \(Fx=0\), multiply once more on the right by \(x\). Associativity and centrality of \(r=x^2\) give
\[
 0=(Fx)x=F x^2=Fr=rF.
\]
The exterior decomposition \eqref{eq:wedge-decomposition} makes the finite radial algebra a free module over its scalar polynomial algebra, in which the indeterminate \(r\) is not a zero divisor. Therefore \(F=0\). If the coefficient ring is centrally localized, the same exterior basis remains free and multiplication by \(r\) remains injective, so the argument is unchanged. After \(Q=q^M\) and \(q=q_0\in(0,1)\), the same normal form is a free module over the resulting scalar polynomial algebra, and the identical argument applies.}
\end{proof}

\newrev{The distinction between right and left multiplication is already visible in degree zero. The following example compares the two products.}

\begin{example}[Right and left multiplication in degree zero]\newrev{\label{ex:right-pairing}}
\newrev{Continue Example~\ref{ex:low-degree} with \(Y=\{y\}\). Right multiplication gives
\[
 R_x(y)=yx=-w+\frac{s}{2}.
\]
Applying the derivative term by term,
\[
 \partial^{\{y\},\mathrm R}_{x,q}(-w)=\qnum{m-1}y,
 \qquad
 \partial^{\{y\},\mathrm R}_{x,q}\left(\frac{s}{2}\right)=y,
\]
and hence
\[
 \Kop^{\{y\}}_0(y)
 =\partial^{\{y\},\mathrm R}_{x,q}(yx)
 =\bigl(1+\qnum{m-1}\bigr)y.
\]
After \(Q=q^M\) and \(q\to1\), this becomes \(My\), which is nonzero in every positive dimension; at \(M=2\) it equals \(2y\). By contrast,
\[
 xy=w+\frac{s}{2}
 \quad\Longrightarrow\quad
 [xy]\partial_x=-(M-1)y+y=(2-M)y,
\]
which vanishes at \(M=2\). Thus right and left multiplication give different degree-zero Fischer operators.}
\end{example}

\newrev{We now consider the homogeneous operator \(\Kop^Y_k=\partial^{Y,\mathrm R}_{x,q}R_x\). To obtain the localized Fischer decomposition, it is enough to show that its determinant is not identically zero; the classical right Fischer decomposition provides a nonsingular specialization.}

\begin{proposition}[Generic nonvanishing]\label{prop:det-nonzero}
For every finite \(Y\) and every \(k\ge0\), the determinant \(D^Y_k\) is not the zero element of \(\T_Y\).
\end{proposition}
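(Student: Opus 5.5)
The plan is to exhibit one numerical specialization at which \(D^Y_k\) becomes a nonzero real number. Since specialization is a ring homomorphism and the determinant commutes with it, \(D^Y_k\) cannot then be zero in \(\T_Y\). The specialization has three stages. First apply \(\sigma_M\), that is \(Q=q^M\), for a suitably large integer \(M\). Then let \(q\to1\). Finally replace the central coefficients \(c_{ij}\) by the numerical Gram values \(c_{ij}=-2\sum_\ell y_{i\ell}y_{j\ell}\) of concrete vectors \(y_i\in\R^M\).

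\textbf{Step 1: the limit of the matrix exists.} Write the matrix of \(\Kop^Y_k\) in the \(\T_Y\)-basis \eqref{eq:H-basis}. Lemma~\ref{lem:Rx-formulas} shows that \(R_x\) has entries in \(\Z[\tfrac12][r,s,c]\) with no \(q\)-dependence. By Definition~\ref{def:finite-derivative} and \eqref{eq:scalar-monomial-action}, \(\dY\) has entries built from \(\qnum{n}\), \(\qnum{m-p}\), \(q^{m-p}\) and powers of \(q\). After \(\sigma_M\) each of these has a removable singularity at \(q=1\). Hence the entries of \(\Kop^Y_k\) lie in a subring of \(\R(q)[c]\) on which evaluation at \(q=1\) is a ring homomorphism. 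The determinant of the limit matrix is therefore the limit of the determinant. By Proposition~\ref{prop:classical-limit-order}, the limit matrix represents \(\dclass R_x\) on the homogeneous block.

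\textbf{Step 2: pass to a faithful Clifford realization.} Choose \(M\ge N+1\) and vectors \(y_1,\dots,y_N\) in general position in \(\R^M\). Then the realization map \(\phi\), which sends \(y_Ir^as^\beta\) and \((x\wedge y_I)r^as^\beta\) (with \(c\) replaced by its numerical values) to Clifford-valued polynomials in \(x\), is injective on the finite-dimensional real space \(H^Y_k\otimes_{\T_Y}\R\).

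Justifying this is the step I expect to be the main obstacle. The plan is to argue as follows. The blades \(y_I\) and \(x\wedge y_I\) are linearly independent over the polynomial ring in \(x_1,\dots,x_M\) when \(x,y_1,\dots,y_N\) are generically independent, which uses \(M\ge N+1\). The scalar monomials \(r^as^\beta\) are algebraically independent functions of \(x\) once the \(y_i\) are independent and \(M\ge N+1\). If this direct argument is awkward, I would fall back on the faithfulness in sufficiently high dimension quoted in Section~2, applied to finitely many vectors.

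By Proposition~\ref{prop:classical-limit-order}, \(\phi\) intertwines \(\dclass R_x\) with \(F\mapsto[Fx]\partial_x\) on \(x\)-homogeneous Clifford polynomials of degree \(k\).

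\textbf{Step 3: the classical operator is injective.} It remains to show that \(F\mapsto[Fx]\partial_x\) is injective on degree-\(k\) homogeneous polynomials. A direct computation with the convention \eqref{eq:clifford-convention} gives
\[
[Fx]\partial_x=(M+2E)F-([F]\partial_x)\,x,
\]
where \(E\) is the Euler operator in \(x\). Now use the classical right Fischer decomposition \(F=\sum_j M_{k-j}x^j\) with right-monogenic \(M_{k-j}\). On each summand, \([\cdot\,x]\partial_x\) acts by a positive constant, of the form \(M+2(k-j)+(\text{nonnegative})\), up to terms of strictly lower \(x\)-power. The operator is therefore triangular with nonzero diagonal, hence injective.

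Combining the three steps: injectivity of the classical operator and of \(\phi\) gives injectivity of the limit of \(\Kop^Y_k\) on the finite-dimensional space \(H^Y_k\otimes_{\T_Y}\R\). Hence its determinant, which is the specialized \(D^Y_k\), is nonzero, and so \(D^Y_k\neq0\) in \(\T_Y\).
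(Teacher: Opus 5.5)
Your proposal is correct and follows the paper's proof essentially step for step: specialize \(Q=q^M\) with \(M\ge N+1\), and realize the \(y_i\) as independent vectors (the paper takes \(y_i=e_i\)) so that the normal-form basis stays independent, using the coordinate orthogonal to the \(y_i\) and the algebraic independence of \(r\) from the \(s_i\); then identify the \(q\to1\) limit with \(F\mapsto[Fx]\partial_x\) through Proposition~\ref{prop:classical-limit-order} and obtain injectivity from the classical Fischer decomposition. The only variation is your Step~3, where the paper argues more directly from \(\mathcal P_{k+1}=\mathcal M^{\mathrm R}_{k+1}\oplus\mathcal P_k x\) plus injectivity of \(R_x\); in your version the action on \(M_{k-j}x^j\) is in fact exactly diagonal (by induction from your commutator identity), with eigenvalue \(M+2k-j\) for even \(j\) but \(j+1\) for odd \(j\), so your description \(M+2(k-j)+(\text{nonnegative})\) is wrong in the odd case, though the eigenvalue is still positive and the conclusion stands.
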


\begin{proof}
Assume, to the contrary, that \(D^Y_k=0\) in \(\T_Y\). Then every specialization of \(D^Y_k\), whenever the defining matrix is specialized coefficientwise, is zero. We construct one specialization for which the classical limit of the matrix is nonsingular.

\newrev{Write \(Y=\{y_1,\ldots,y_N\}\) and take \(M=N+1\). Let \(e_1,\ldots,e_M\) be an orthonormal Clifford basis with
\[
 e_ie_j+e_je_i=-2\delta_{ij}.
\]
Specialize the internal scalar coefficients by realizing
\[
 y_i=e_i\quad(1\le i\le N),
 \qquad
 x=\sum_{j=1}^M x_je_j.
\]
Then
\begin{equation}\label{eq:faithful-test-specialization}
 c_{ij}=-2\delta_{ij},
 \qquad
 s_i=-2x_i,
 \qquad
 r=-\sum_{j=1}^M x_j^2.
\end{equation}
This particular specialization is sufficient: if \(D^Y_k\) were the zero element of \(\T_Y\), it would remain zero after \eqref{eq:faithful-test-specialization}.}

\newrev{Under this specialization the radial normal-form basis remains linearly independent. To see this, put \(z=x_M\). The induced scalar map
\[
 \phi:\R[r,s_1,\ldots,s_N]\longrightarrow
 \R[x_1,\ldots,x_N,z]
\]
sends \(s_i\) to \(-2x_i\) and
\[
 r\longmapsto -z^2-\frac14\sum_{i=1}^Ns_i^2.
\]
It is injective. Indeed, write a nonzero polynomial as
\[
 P(r,s)=\sum_{\ell=0}^d a_\ell(s)r^\ell,
 \qquad a_d(s)\ne0.
\]
After substituting \(r=-z^2-\frac14\sum_i s_i^2\), the coefficient of \(z^{2d}\) is \((-1)^d a_d(s)\), so the resulting polynomial cannot vanish identically. Thus no nonzero polynomial in \(r,s_1,\ldots,s_N\) is killed by \(\phi\).}

\newrev{Now suppose that a specialized radial normal form vanishes,
\begin{equation}\label{eq:faithful-normal-form}
 \sum_I e_I A_I(r,s)
 +\sum_I (x\wedge e_I)B_I(r,s)=0.
\end{equation}
For each \(I\subseteq\{1,\ldots,N\}\), the Clifford-basis coefficient of \(e_M\wedge e_I\) in \eqref{eq:faithful-normal-form} is, up to the fixed ordering sign, \(zB_I(r,s)\). No term with a different index set contributes to that basis blade. Since the polynomial ring is a domain and \(\phi\) is injective, every \(B_I\) is zero. The remaining relation \(\sum_Ie_IA_I=0\) then gives \(A_I=0\) for all \(I\). Consequently the specialized basis \eqref{eq:H-basis} is linearly independent, and the classical specialization of \(H^Y_k\) identifies with a finite-dimensional subspace
\[
 \widehat H^Y_k\subset \mathcal P_k(\R^M;\mathrm{Cl}_M)
\]
of homogeneous Clifford-valued polynomials.}

\newrev{The required nonsingularity is obtained from the ordinary ambient Fischer decomposition. For the left Dirac operator
\[
 D_LF=\sum_{j=1}^Me_j\frac{\partial F}{\partial x_j},
\]
the Euclidean Fischer decomposition reviewed in
\cite{BrackxDelangheSommen1982,BrackxDeSchepperSoucek2010} applies to homogeneous polynomial spinors. More precisely, the decomposition displayed in \cite{BrackxDeSchepperSoucek2010} writes the full polynomial space as the direct sum of the spaces \(x^p\mathcal M^L_j\). Taking its homogeneous component of degree \(k+1\) separates the term with \(p=0\) from all terms with \(p\ge1\), and therefore gives
\begin{equation}\label{eq:ambient-left-fischer}
 \mathcal P_{k+1}(\R^M;\mathrm{Cl}_M)
 =\mathcal M^L_{k+1}\oplus x\mathcal P_k(\R^M;\mathrm{Cl}_M),
 \qquad
 \mathcal M^L_{k+1}=\ker D_L.
\end{equation}
The 2010 reference formulates this decomposition for polynomials with values in an irreducible spinor module, so one further passage is needed for the Clifford-valued space used here. Complexify if necessary. The complex Clifford algebra, viewed as a left module over itself, is a finite direct sum of irreducible spinor modules; applying the cited decomposition to each summand gives the complexified Clifford-valued version of \eqref{eq:ambient-left-fischer}. For a real Clifford-valued polynomial, complex conjugation preserves both summands, and uniqueness of the complexified decomposition makes both components real. Thus \eqref{eq:ambient-left-fischer} holds on the real Clifford-valued polynomial space as well. Equation~\eqref{eq:ambient-left-fischer} is an ambient polynomial-space decomposition; no Fischer decomposition of the radial subspace is assumed.

Let \(\widetilde{\phantom F}\) denote Clifford reversion. Since \(\widetilde{x}=x\),
\[
 \widetilde{xF}=\widetilde F x,
 \qquad
 \widetilde{D_LF}=D_R(\widetilde F),
 \qquad
 D_RF:=\sum_{j=1}^M\frac{\partial F}{\partial x_j}e_j.
\]
Therefore reversion turns \eqref{eq:ambient-left-fischer} into the right-handed decomposition
\begin{equation}\label{eq:ambient-right-fischer}
 \mathcal P_{k+1}(\R^M;\mathrm{Cl}_M)
 =\mathcal M^R_{k+1}\oplus \mathcal P_k(\R^M;\mathrm{Cl}_M)x,
 \qquad
 \mathcal M^R_{k+1}=\ker D_R.
\end{equation}
Our convention is \(\partial_x^{\mathrm R}=-D_R\), so the two operators have the same kernel.}

\newrev{It follows directly from \eqref{eq:ambient-right-fischer} that
\[
 \partial_x^{\mathrm R}R_x:\mathcal P_k(\R^M;\mathrm{Cl}_M)
 \longrightarrow\mathcal P_k(\R^M;\mathrm{Cl}_M)
\]
is injective. Indeed, if \(\partial_x^{\mathrm R}(Ux)=0\), then \(Ux\) belongs to both summands in \eqref{eq:ambient-right-fischer}, hence \(Ux=0\). Multiplication once more on the right by \(x\) gives
\[
 0=(Ux)x=Ux^2=-|x|^2U,
\]
and therefore \(U=0\). In particular, its restriction to \(\widehat H^Y_k\) is injective. Proposition~\ref{prop:classical-limit-order} shows that this restriction preserves \(\widehat H^Y_k\), because it is the classical limit of the radial operator \(\Kop^Y_k\). Since \(\widehat H^Y_k\) is finite-dimensional, the determinant of the restricted classical operator in the specialized radial basis is nonzero.}

\newrev{Finally specialize \(Q=q^M\) and \(c_{ij}=-2\delta_{ij}\) in \(\Kop^Y_k\). Every matrix entry has a removable singularity at \(q=1\), and Proposition~\ref{prop:classical-limit-order}, together with the fact that right multiplication is undeformed, gives
\begin{equation}\label{eq:K-classical-limit}
 \lim_{q\to1}
 \left.\sigma_M(\Kop^Y_k)\right|_{c_{ij}=-2\delta_{ij}}
 =\partial_x^{\mathrm R}R_x\bigm|_{\widehat H^Y_k}.
\end{equation}
Since the limiting operator is injective on the finite-dimensional space \(\widehat H^Y_k\), its determinant is nonzero. Continuity of the determinant in the matrix entries therefore shows that
\[
 \left.\sigma_M(D^Y_k)\right|_{c_{ij}=-2\delta_{ij}}
\]
is not identically zero as a rational function of \(q\), contradicting \(D^Y_k=0\).}
\end{proof}

\newrev{Generic nonvanishing does not exclude zeros at particular numerical specializations. Section~6 determines these factors explicitly in the smallest supports and identifies a degree-zero factor for general support.}

\newrev{\begin{remark}
Proposition~\ref{prop:det-nonzero} uses the classical theory only to exhibit one nonsingular specialization. It does not assert that a given numerical pair \((q,M)\) is nonresonant. Numerical nonvanishing is a separate question addressed explicitly in Section~6 for low support and, in general, encoded by the determinant \(D^Y_k\).
\end{remark}}

\newrev{Once \(D^Y_k\) is inverted, the usual Fischer subtraction can be carried out explicitly on each homogeneous block. The formula below also identifies the monogenic projector.}

\begin{theorem}[Finite determinant-localized Fischer decomposition]\label{thm:finite-det-fischer}
After localizing \(\T_Y\) by \(D^Y_k\), one has
\begin{equation}\label{eq:finite-det-fischer}
 \newrev{(H^Y_{k+1})_{D^Y_k}
 =\ker\!\left(\dY:(H^Y_{k+1})_{D^Y_k}\to(H^Y_k)_{D^Y_k}\right)
 \oplus R_x(H^Y_k)_{D^Y_k}.}
\end{equation}
The projection onto the right \(q\)-monogenic summand is
\begin{equation}\label{eq:mon-projection}
 \newrev{\Pi^{Y}_{k+1}
 =I-R_x(\Kop^Y_k)^{-1}\dY.}
\end{equation}
\end{theorem}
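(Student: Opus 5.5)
The plan is the standard Fischer subtraction argument on one homogeneous block. Everything is done over the localized ring \(\T_Y[(D^Y_k)^{-1}]\), where \(\Kop^Y_k\) becomes invertible.

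First, fix the setting. By \eqref{eq:H-basis}, \(H^Y_k\) is a finite free \(\T_Y\)-module, and \(\Kop^Y_k\) is a \(\T_Y\)-linear endomorphism of it. After inverting \(D^Y_k=\det\Kop^Y_k\), the adjugate formula \((\Kop^Y_k)^{-1}=(D^Y_k)^{-1}\operatorname{adj}(\Kop^Y_k)\) gives a two-sided inverse on \((H^Y_k)_{D^Y_k}\). Proposition~\ref{prop:det-nonzero} guarantees \(D^Y_k\ne0\), so this localization is not the zero module. I will also record that \(\dY\) and \(R_x\) are \(\T_Y\)-linear. They therefore extend to the localization and preserve the degree bookkeeping \(R_xH^Y_k\subseteq H^Y_{k+1}\) and \(\dY H^Y_{k+1}\subseteq H^Y_k\).

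Next, I define \(\Pi=\Pi^Y_{k+1}:=I-R_x(\Kop^Y_k)^{-1}\dY\) on \((H^Y_{k+1})_{D^Y_k}\) and check four properties.
\begin{itemize}
\item[(i)] \(\dY\Pi=\dY-(\dY R_x)(\Kop^Y_k)^{-1}\dY=\dY-\Kop^Y_k(\Kop^Y_k)^{-1}\dY=0\), so \(\im\Pi\) lies in the kernel.
\item[(ii)] \(I-\Pi=R_x(\Kop^Y_k)^{-1}\dY\) has image in \(R_x(H^Y_k)_{D^Y_k}\).
\item[(iii)] On the kernel, \(\Pi=I\). On \(R_x(H^Y_k)_{D^Y_k}\), for \(F=R_xU\), we get \(\Pi F=R_xU-R_x(\Kop^Y_k)^{-1}\Kop^Y_kU=0\).
\end{itemize}
Together these show \(\Pi\) is idempotent, with image equal to the kernel and kernel equal to \(R_x(H^Y_k)_{D^Y_k}\). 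Every \(F\) splits as \(F=\Pi F+(I-\Pi)F\).

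It then remains to show the sum is direct. If \(F=R_xU\) and \(\dY F=0\), then \(\Kop^Y_kU=0\), hence \(U=0\) and \(F=0\). This uses only invertibility of \(\Kop^Y_k\), so Lemma~\ref{lem:Rx-injective} is not needed here. That lemma does show \(R_x(H^Y_k)_{D^Y_k}\cong(H^Y_k)_{D^Y_k}\), which is worth remarking.

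The only point needing care is the claim that localization commutes with taking kernels and images, i.e. that \(\ker\) of the localized \(\dY\) is what appears in \eqref{eq:finite-det-fischer}. I would state everything directly on the localized modules, as the theorem does. Localization at a central multiplicative set is exact, so no subtlety arises. The \(\T_Y\)-linearity of \(\dY\) is the one ingredient to verify carefully. It holds because the \(c_{ij}\) are fixed by the dilation \eqref{eq:scalar-dilation} and hence pass through all Jackson operators in \eqref{eq:relative-vector}.
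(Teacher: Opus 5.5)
Your proposal is correct and follows essentially the same route as the paper. Both arguments use the Fischer subtraction \(F_0=F-R_x(\Kop^Y_k)^{-1}\dY F\), get directness from \(\Kop^Y_kU=0\Rightarrow U=0\), and identify \(\Pi^Y_{k+1}\) through \(\dY\Pi^Y_{k+1}=0\), \(\Pi^Y_{k+1}=I\) on the kernel, and \(\Pi^Y_{k+1}R_x=0\); your extra remarks (adjugate inverse, exactness of localization, \(\T_Y\)-linearity of \(\dY\)) only make explicit what the paper takes for granted.
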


\begin{proof}
Localization by \(D^Y_k\) makes \(\Kop^Y_k\) invertible. For \(F\in(H^Y_{k+1})_{D^Y_k}\), define
\[
 U=(\Kop^Y_k)^{-1}\dY F,
 \qquad
 F_0=F-\newrev{R_xU}=F-Ux.
\]
Then
\[
 \dY F_0
 =\dY F-\dY R_x(\Kop^Y_k)^{-1}\dY F
 =\dY F-\Kop^Y_k(\Kop^Y_k)^{-1}\dY F=0.
\]
Thus every \(F\) is a sum of a right \(q\)-monogenic element and an element of \(R_x(H^Y_k)\).

For directness, suppose \(R_xU\) also lies in the kernel. Then
\[
 0=\dY R_xU=\Kop^Y_kU,
\]
so invertibility of \(\Kop^Y_k\) gives \(U=0\). \newrev{Equivalently, the operator \(\Pi^Y_{k+1}\) satisfies
\[
 \dY\Pi^Y_{k+1}=0,
 \qquad
 \Pi^Y_{k+1}F=F\quad(F\in\ker\dY),
 \qquad
 \Pi^Y_{k+1}R_xU=0.
\]
These three identities identify it as the projection onto the first summand along the second. In particular \((\Pi^Y_{k+1})^2=\Pi^Y_{k+1}\).}
\end{proof}

\newrev{To pass from finite parameter sets to the ambient radial algebra, it remains to prove compatibility of the finite projectors under enlargement of the auxiliary set and to account explicitly for the determinant localization.}

\begin{theorem}[Global determinant-localized Fischer decomposition]\label{thm:global-mon-fischer}
Let \(\Omega_x^{\mathrm{mon}}\) be the central multiplicative set generated by all determinants \(D^Y_k\), where \(Y\) ranges over the finite subsets of \(S\setminus\{x\}\) and \(k\ge0\). Then
\begin{equation}\label{eq:global-mon-fischer}
 \newrev{\RB(S)_{\Omega_x^{\mathrm{mon}}}
 =\ker\!\left(\dglob:\RB(S)_{\Omega_x^{\mathrm{mon}}}
 \to\RB(S)_{\Omega_x^{\mathrm{mon}}}\right)
 \oplus R_x\RB(S)_{\Omega_x^{\mathrm{mon}}}.}
\end{equation}
Equivalently, the second summand is \(\RB(S)_{\Omega_x^{\mathrm{mon}}}x\).
\end{theorem}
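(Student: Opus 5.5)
The global decomposition will be assembled from the finite determinant-localized decompositions of Theorem~\ref{thm:finite-det-fischer}, glued along the direct-limit structure of Theorem~\ref{thm:direct-limit-derivative}. Take \(F\in\RB(S)_{\Omega_x^{\mathrm{mon}}}\). It involves only finitely many vector variables and finitely many denominators. So we choose a finite \(Y\subset S\setminus\{x\}\) containing every variable other than \(x\) that occurs in \(F\). We then write \(F\) as a finite sum of \(x\)-homogeneous components \(F_{n}\in (H^Y_{n})_{\Omega}\), using the grading \eqref{eq:x-degree}. This grading is compatible with central localization because all denominators \(D^Y_k\) lie in \(\T_Y\), which has \(x\)-degree zero.

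Components of degree \(0\) are already monogenic. Indeed, \(H^Y_0\) is spanned by the \(y_I\,P(c)\), and \(\dY(y_I)=0\) by \eqref{eq:pure-blade-actions}. For \(n=k+1\ge1\) we define
\[
\Pi F_{k+1}:=\Pi^Y_{k+1}F_{k+1}=F_{k+1}-R_x(\Kop^Y_k)^{-1}\dY F_{k+1},
\]
which makes sense because \(D^Y_k\in\Omega_x^{\mathrm{mon}}\). We then set \(\Pi F=\sum_n \Pi F_n\) and \(F-\Pi F\in R_x\RB(S)_{\Omega}\). By Theorem~\ref{thm:finite-det-fischer}, \(\Pi F\) lies in the kernel of \(\dY\). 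By Theorem~\ref{thm:direct-limit-derivative}, \(\dY\) agrees with \(\dglob\) on the subalgebra generated by \(x\) and \(Y\), so \(\Pi F\) lies in \(\ker\dglob\). This gives existence of the decomposition.

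The key step, and the one I expect to be the main obstacle, is showing that \(\Pi\) does not depend on the choice of \(Y\). Let \(Y\subset Z\). Then \(H^Y_k\subset H^Z_k\) is a \(\T_Y\)-submodule, and both \(\dY\) and \(R_x\) are restrictions of the corresponding \(Z\)-operators. Hence \(\Kop^Z_k\) preserves \(H^Y_k\) and restricts there to \(\Kop^Y_k\). After localizing at \(D^Y_kD^Z_k\), both operators are invertible. For \(G\in H^Y_k\), put \(U=(\Kop^Y_k)^{-1}G\in H^Y_k\). Then \(\Kop^Z_kU=G\), so \((\Kop^Z_k)^{-1}G=U\), and the two projectors agree on \((H^Y_{k+1})_{\Omega}\). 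I do not need the relation between \(D^Y_k\) and \(D^Z_k\), only that both are inverted in \(\Omega_x^{\mathrm{mon}}\). This sidesteps a block-triangular determinant factorization. One should also check that the splitting of \(H^Z_k\) by support does not intervene; it does not, because the argument uses only invariance of the subspace together with injectivity of \(\Kop^Z_k\) after localization.

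For directness, suppose \(G=Ux\) lies in \(\ker\dglob\), with \(U\in\RB(S)_\Omega\). Choose a finite \(Y\) covering both \(U\) and \(G\), and decompose \(U\) into homogeneous pieces \(U_k\). Since \(R_x\) raises degree by one and \(\dY\) lowers it by one, the condition splits degreewise into \(\Kop^Y_kU_k=0\). Therefore \(U_k=0\) after localization, hence \(G=0\). Lemma~\ref{lem:Rx-injective} identifies the second summand with \(\RB(S)_{\Omega}x\) without ambiguity. Finally, the element \(U\) in the decomposition is unique, which gives the stated equivalence.
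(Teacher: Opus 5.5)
Your proposal is correct and follows essentially the same route as the paper. You decompose each element degreewise over a finite $Y$ using $\Pi^Y_{k+1}=I-R_x(\Kop^Y_k)^{-1}\dY$, and you obtain $Y$-independence from $\Kop^Z_k|_{H^Y_k}=\Kop^Y_k$ together with uniqueness of inverses. The differences are minor: you prove directness degreewise from $\Kop^Y_kU_k=0$ instead of applying the global projector, and you absorb the denominators into the choice of $Y$ instead of reducing to an unlocalized numerator $\omega^{-1}F$. Your directness argument already makes the $Y$-independence step a consequence of uniqueness of the decomposition, so it is not the obstacle you expected.
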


\begin{proof}
\newrev{By Proposition~\ref{prop:det-nonzero}, every generator \(D^Y_k\) is nonzero. The central scalar algebra generated by the anticommutators is a polynomial algebra over the domain \(\B\), so finite products of these generators are nonzero; in particular, \(0\notin\Omega_x^{\mathrm{mon}}\). Every generator \(D^Y_k\) belongs to \(\T_Y\); hence it depends only on \(q,Q\) and anticommutators among auxiliary vectors, not on the distinguished vector \(x\). The defining formulas for \(\dglob\) carry all such central coefficients unchanged. Thus \(\dglob\) is linear over the central coefficient algebra containing \(\Omega_x^{\mathrm{mon}}\), and both \(\dglob\) and \(R_x\) extend to the localization.

Write
\[
 \mathcal R_x:=\RB(S)_{\Omega_x^{\mathrm{mon}}}.
\]
An arbitrary element of \(\mathcal R_x\) can be written as \(\omega^{-1}F\), with
\(F\in\RB(S)\) and \(\omega\in\Omega_x^{\mathrm{mon}}\). Since \(\omega\) is central and \(\dglob\omega=0\), a decomposition of \(F\) may be multiplied by \(\omega^{-1}\). It is therefore enough to construct the decomposition for an unlocalized numerator \(F\), while all inverses are taken in \(\mathcal R_x\).}

\newrev{Every element \(F\in\RB(S)\) involves only finitely many auxiliary vectors and is a finite sum of homogeneous \(x\)-degree components. Choose a finite set \(Y\) containing all auxiliary variables occurring in \(F\), and write \(F=\sum_nF_n\) with \(F_n\in H^Y_n\). Since \(\dY H^Y_0=0\), the degree-zero component is already right \(q\)-monogenic. For every \(n\ge1\), Theorem~\ref{thm:finite-det-fischer} with \(k=n-1\), followed by the further localization from \((\T_Y)_{D^Y_{n-1}}\) to \(\mathcal R_x\), gives the unique decomposition
\[
 F_n=(F_n)_0+R_xU_{n-1},
 \qquad
 \dY(F_n)_0=0.
\]
Set \((F_0)_0=F_0\). Summing over the finitely many degrees gives a relative Fischer decomposition of \(F\) in \(\mathcal R_x\), and hence also of \(\omega^{-1}F\).}

\newrev{We now verify that the decomposition is independent of the chosen finite auxiliary set. If \(Y\subset Z\), then right multiplication by \(x\) introduces no new auxiliary vector and Theorem~\ref{thm:direct-limit-derivative} gives
\[
 \Kop^Z_k\bigm|_{H^Y_k}=\Kop^Y_k.
\]
Thus \(H^Y_k\) is invariant under \(\Kop^Z_k\). After extension of scalars to \(\mathcal R_x\), the restriction \(\Kop^Y_k\) is invertible. For \(V\in H^Y_k\), the element \((\Kop^Y_k)^{-1}V\) is therefore a preimage of \(V\) under \(\Kop^Z_k\); uniqueness of the inverse of \(\Kop^Z_k\) gives
\[
 (\Kop^Z_k)^{-1}V=(\Kop^Y_k)^{-1}V.
\]
Consequently the projectors \(\Pi^Y_{k+1}\) and \(\Pi^Z_{k+1}\) agree on \(H^Y_{k+1}\), after the common localization. Together with the identity on degree zero, they therefore define a global projector
\[
 \Pi:\mathcal R_x\longrightarrow\ker\dglob
\]
degree by degree, with \(\Pi F=F\) on \(\ker\dglob\) and
\(\Pi(R_xU)=0\) for every \(U\in\mathcal R_x\). Hence
\[
 \mathcal R_x=\ker\dglob+R_x\mathcal R_x.
\]
If an element belongs to both summands, applying \(\Pi\) to it gives the element itself from the first description and \(0\) from the second. The intersection is therefore zero, proving \eqref{eq:global-mon-fischer}.}
\end{proof}

\section{Explicit determinant factors and exceptional specializations}

\newrev{This section studies the determinants occurring in Section~5. We first compute the scalar block with no auxiliary vector and then the complete matrix for one auxiliary vector, displaying the intermediate multiplication and differentiation formulas. For arbitrary finite support, an exact-support filtration gives a factorization of the determinant and an explicit degree-zero factor.}

Theorem~\ref{thm:global-mon-fischer} gives the general right \(q\)-monogenic Fischer decomposition after determinant localization. We identify the determinant factors for the smallest finite supports and then factor the general determinant by exact \(x\)-support. \newrev{All computations use the derivative \eqref{eq:derivative-x} together with right multiplication \eqref{eq:Rx-y}--\eqref{eq:Rx-x}.}

\subsection{One vector}

\newrev{With no auxiliary vector, each homogeneous block has rank one over the scalar coefficient ring. Right and left multiplication coincide, and the corresponding determinant can therefore be computed directly.}

For \(Y=\varnothing\), the homogeneous block is one-dimensional. Right and left multiplication by \(x\) coincide in this case, and the determinant is
\begin{equation}\label{eq:empty-factor}
 D^\varnothing_k=\begin{cases}
 \qnum{m+k},& k\text{ even},\\
 \qnum{k+1},& k\text{ odd}.
 \end{cases}
\end{equation}
Indeed, for \(k=2a\),
\[
 r^a\xmapsto{R_x}xr^a\xmapsto{\partial^{\mathrm R}_{x,q}}\qnum{m+2a}r^a,
\]
whereas for \(k=2a+1\),
\[
 xr^a\xmapsto{R_x}r^{a+1}\xmapsto{\partial^{\mathrm R}_{x,q}}\qnum{2a+2}xr^a.
\]

\newrev{\subsection{One auxiliary vector: complete two-vector determinant}}

\newrev{With one auxiliary vector, the exterior-blade term and right multiplication both contribute to the homogeneous matrices. We give the intermediate formulas for each matrix entry. The calculation splits according to Clifford parity and is block triangular with respect to the radial exponent.}

Let \(Y=\{y\}\) and put
\[
 r=x^2,
 \qquad s=\{x,y\},
 \qquad t=y^2,
 \qquad w=x\wedge y.
\]
\newrev{Since \(c_{11}=\{y,y\}=2t\), the coefficient ring
\(\T_{\{y\}}=\B[c_{11}]\) is equivalently written as \(\B[t]\).}
The elementary products needed below are
\begin{equation}\label{eq:two-vector-products}
xy=w+\frac{s}{2},\qquad yx=-w+\frac{s}{2},\qquad
wx=\frac{s}{2}x-ry,\qquad wy=tx-\frac{s}{2}y.
\end{equation}
\newrev{The first and third identities are exactly the \(p=1\) instances of the left exterior product and the right-multiplication formula in Lemma~\ref{lem:Rx-formulas}; the second is \(R_x(y)\).}

\newrev{It is convenient to record four scalar-coefficient differentiation formulas that will be used repeatedly. For \(f=f(r,s,t)\), write
\[
 \dscal f=x\alpha_f+y\beta_f,
 \qquad
 \alpha_f=(1+q)\delta^{(r)}_{q^2}f,
 \qquad
 \beta_f=2T^{(r)}_{q^2}\delta^{(s)}_qf.
\]
Using \(x^2=r\), \(xy=w+s/2\), \(yx=-w+s/2\), \(wx=(s/2)x-ry\), and \(wy=tx-(s/2)y\), Definition~\ref{def:finite-derivative} gives
\begin{align}
 \partial^{\{y\},\mathrm R}_{x,q}(f)
 &=x\alpha_f+y\beta_f,\label{eq:d-scalar-helper}\\
 \partial^{\{y\},\mathrm R}_{x,q}(xf)
 &=\qnum m f+Q\left(r\alpha_f+\left(w+\frac{s}{2}\right)\beta_f\right),\label{eq:d-x-helper}\\
 \partial^{\{y\},\mathrm R}_{x,q}(yf)
 &=\left(-w+\frac{s}{2}\right)\alpha_f+t\beta_f,\label{eq:d-y-helper}\\
 \partial^{\{y\},\mathrm R}_{x,q}(wf)
 &=-\qnum{m-1}yf
 +Qq^{-1}\left[\left(\frac{s}{2}x-ry\right)\alpha_f
 +\left(tx-\frac{s}{2}y\right)\beta_f\right].\label{eq:d-w-helper}
\end{align}
For the monomial \(f=r^as^b\), these coefficients reduce to
\[
 \alpha_f=\qnum{2a}r^{a-1}s^b,
 \qquad
 \beta_f=2q^{2a}\qnum b\,r^as^{b-1},
\]
with a term omitted when its exponent is zero. These identities reduce the determinant calculation to scalar \(q\)-number formulas.}

For later use define
\begin{align}
 A_{k,a}&:=\qnum m+Q\left(\qnum{2a}+2q^{2a}\qnum{k-2a}\right),
 &&0\le a\le\left\lfloor\frac{k-1}{2}\right\rfloor,
 \label{eq:Aka}\\
 \newrev{B^{\mathrm R}_{k,a}}
 &\newrev{:=\qnum{m-1}+Qq^{-1}\left(\qnum{k}+q^{2a+2}\qnum{k-1-2a}\right),}
 &&0\le a\le\left\lfloor\frac{k-2}{2}\right\rfloor,
 \label{eq:Bka}\\
 \newrev{C^{\mathrm R}_k}
 &\newrev{:=\qnum{m-1}+\qnum{k+1}+Qq^{-1}\qnum{k}.}
 \label{eq:Ck}
\end{align}
Empty products below are interpreted as \(1\).

\newrev{The following proposition gives the closed determinant. Its proof derives each diagonal block from \eqref{eq:d-scalar-helper}--\eqref{eq:d-w-helper} before taking determinants.}

\begin{theorem}[Two-vector determinant]\label{thm:two-vector-det}
For every \(k\ge0\),
\newrev{\begin{equation}\label{eq:closed-two-vector}
\begin{aligned}
D^{\{y\}}_k
&=\bigl(\qnum m+Q\qnum{k}\bigr)C^{\mathrm R}_k
  \prod_{a=0}^{k/2-1}\qnum{2a+2}A_{k,a}
  \prod_{a=0}^{k/2-1}\qnum{2a+2}B^{\mathrm R}_{k,a},
  &&k\text{ even},\\[2mm]
D^{\{y\}}_k
&=\qnum{k+1}C^{\mathrm R}_k
  \prod_{a=0}^{(k-1)/2}\qnum{2a+2}A_{k,a}
  \prod_{a=0}^{(k-3)/2}\qnum{2a+2}B^{\mathrm R}_{k,a},
  &&k\text{ odd}.
\end{aligned}
\end{equation}}
\end{theorem}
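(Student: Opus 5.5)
The plan is to compute \(\Kop^{\{y\}}_k=\partial^{\{y\},\mathrm R}_{x,q}R_x\) on the explicit \(\T_{\{y\}}=\B[t]\)-basis \eqref{eq:H-basis} of \(H^{\{y\}}_k\). I will split this basis by Clifford parity, which both \(R_x\) and the derivative shift by one, so that \(\Kop^{\{y\}}_k\) preserves it. The first piece is spanned by the scalars \(r^as^{k-2a}\) together with \(w\,r^as^{k-1-2a}\). The second piece is spanned by \(y\,r^as^{k-2a}\) together with \(x\,r^as^{k-1-2a}\). The determinant \(D^{\{y\}}_k\) is then the product of the determinants of the two parity blocks. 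Inside each parity block I order the basis by the radial exponent \(a\). I will show the matrix is block triangular in \(a\), with diagonal blocks of size at most \(2\).

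The first step is to compute \(R_x\) on each basis element, using \eqref{eq:two-vector-products} and Lemma~\ref{lem:Rx-formulas}:
\[
 f\mapsto xf,\qquad
 yf\mapsto\Bigl(-w+\frac{s}{2}\Bigr)f,\qquad
 xf\mapsto rf,\qquad
 wf\mapsto\Bigl(\frac{s}{2}x-ry\Bigr)f.
\]
The second step is to apply the helper formulas \eqref{eq:d-scalar-helper}--\eqref{eq:d-w-helper} with \(f=r^as^b\), where \(\alpha_f=\qnum{2a}r^{a-1}s^b\) and \(\beta_f=2q^{2a}\qnum b r^as^{b-1}\). Every resulting term either keeps the radial exponent \(a\) or lowers it. Here a factor \(s\) or \(r\) coming from \(R_x\) is compensated by the lowering in \(\alpha_f\) or \(\beta_f\). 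Hence the coefficient matrix is triangular with respect to \(a\), with diagonal blocks pairing the two basis types at the same exponent \(a\). I will extract those \(2\times2\) blocks entry by entry, using the \(q\)-number identities \(\qnum{m+n}=\qnum m+Q\qnum n\) and \(\qnum{n}+q^n\qnum{l}=\qnum{n+l}\).

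The third step is to compute each diagonal block determinant. I expect the generic blocks to factor as \(\qnum{2a+2}A_{k,a}\) in one parity and \(\qnum{2a+2}B^{\mathrm R}_{k,a}\) in the other. The factor \(\qnum{2a+2}\) should come from the \(r\to r^{a+1}\) lowering \(\alpha\) applied after right multiplication.

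The blocks at the extreme exponents are \(1\times1\), or have a zero off-diagonal entry. These are the pure scalar \(r^{k/2}\) for \(k\) even, the pure \(x r^{(k-1)/2}\) for \(k\) odd, and the top \(s\)-power elements \(y s^k\) and \(w s^{k-1}\). They should produce the isolated factors. For \(k\) even the pure scalar gives \(\qnum m+Q\qnum k=\qnum{m+k}\), consistent with \eqref{eq:empty-factor}; for \(k\) odd the pure \(x\)-term gives \(\qnum{k+1}\). The \(y\)-type top element should give \(C^{\mathrm R}_k\), which I will check against Example~\ref{ex:right-pairing} at \(k=0\), where it gives \(1+\qnum{m-1}\). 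The index ranges of the products in \eqref{eq:closed-two-vector} then follow by counting how many exponents \(a\) carry a genuine \(2\times2\) block in each parity, separately for \(k\) even and \(k\) odd.

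The main obstacle is the third step: showing that each \(2\times2\) determinant collapses to a product of \(\qnum{2a+2}\) and a single linear-in-\(Q\) factor. The entries involve \(t\), and the \(wy=tx-\frac s2 y\) term feeds \(t\) into off-diagonal positions. I will first try to verify that \(t\) appears only in one off-diagonal entry of each block, and is multiplied by an entry that vanishes at that exponent, so that it drops out of the determinant. Otherwise I will rearrange the order so that the \(t\)-terms lie strictly below the diagonal in \(a\). I will sanity-check the result at small \(k\) (\(k=0,1,2\)) and in the classical limit \(Q=q^M\), \(q\to1\), where every factor should be positive.
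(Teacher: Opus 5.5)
There is a genuine gap in your second step, and it breaks the odd-parity block. Your overall architecture (split by Clifford parity, order by radial exponent, reduce to $2\times2$ diagonal blocks) is right. In the even block, your same-exponent pairs $(r^as^{k-2a},\,wr^as^{k-1-2a})$ do give a block-triangular matrix.

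The claim that every term keeps or lowers $a$ is false. The $s$-differentiation part of the derivative (the $\beta_f$ term) does not absorb a factor $r$ produced by $R_x$. In the even block this is harmless: the only such term is $wr^as^{k-1-2a}\mapsto t\,r^{a+1}s^{k-2-2a}$, so the triangular order is merely reversed. In the odd block, however, write $\xi_a=xr^as^{k-1-2a}$ and $\upsilon_a=yr^as^{k-2a}$. Then
\[
 \Kop^{\{y\}}_k\xi_a=\qnum{2a+2}\xi_a+2q^{2a+2}\qnum{k-1-2a}\,\upsilon_{a+1},
\]
while $\Kop^{\{y\}}_k\upsilon_{a+1}$ contains $\tfrac12(1-Qq^{-1})\qnum{2a+2}\,\xi_a$. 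So your same-exponent blocks $(\xi_a,\upsilon_a)$ are coupled in both directions, and the matrix is not block triangular.

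The error is easy to miss, because inside each such block $t$ does sit opposite a zero, exactly as you hoped. If you multiply these block determinants anyway, $\upsilon_{a+1}$ contributes its diagonal entry
$\Gamma_{k,a}=\qnum{m-1}+Qq^{-1}\bigl(\qnum{2a+2}+q^{2a+2}\qnum{k-2a-2}\bigr)+q^{2a+2}\qnum{k-2a-1}$
rather than $B^{\mathrm R}_{k,a}$. The cases $k=0,1$ do not detect this. For $k=2$ (odd basis $ys^2,xs,yr$) you would get $\qnum{2}C^{\mathrm R}_2\Gamma_{2,0}$. This differs from the true value $\qnum{2}C^{\mathrm R}_2B^{\mathrm R}_{2,0}$ by $\qnum{2}C^{\mathrm R}_2\,q^2(1-Qq^{-1})$.

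The missing idea is a shifted pairing in the odd block. Pair $\xi_a$ with $\upsilon_{a+1}$, and order the basis as $\upsilon_0,(\xi_0,\upsilon_1),(\xi_1,\upsilon_2),\ldots$, appending $\xi_{(k-1)/2}$ when $k$ is odd. Then the only terms leaving a diagonal block are $\upsilon_a\mapsto t\,\xi_a$, which land in the next pair. Hence $t$ never enters a diagonal block in either parity, so the $t$-dependence you flagged as the main obstacle is not where the difficulty lies. Each pair then has determinant
$\qnum{2a+2}\bigl(\Gamma_{k,a}-(1-Qq^{-1})q^{2a+2}\qnum{k-1-2a}\bigr)=\qnum{2a+2}B^{\mathrm R}_{k,a}$,
using $\qnum{2a+2}+q^{2a+2}\qnum{k-2a-2}=\qnum{k}$.

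Your list of isolated elements needs the same correction. The element $ws^{k-1}$ is not isolated: it forms with $s^k$ a genuine $2\times2$ block, both off-diagonal entries nonzero, of determinant $\qnum{2}A_{k,0}$. The isolated elements are exactly:
\begin{itemize}
\item $ys^k$, giving $C^{\mathrm R}_k$; it is isolated only because of the shift;
\item $r^{k/2}$ for $k$ even;
\item $xr^{(k-1)/2}$ for $k$ odd.
\end{itemize}
This is also what the index ranges in the closed formula require.
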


\begin{proof}
\newrev{The parity decomposition
\[
 \mathcal E=\B[r,s,t]\oplus w\B[r,s,t],
 \qquad
 \mathcal O=x\B[r,s,t]\oplus y\B[r,s,t]
\]
is preserved by \(\Kop_k^{\{y\}}=\partial^{\{y\},\mathrm R}_{x,q}R_x\). We compute the even and odd Clifford-parity determinant blocks separately. These parity spaces are denoted by \(\mathcal E,\mathcal O\), reserving \(\mathcal P_k(\R^M;\mathrm{Cl}_M)\) for the homogeneous polynomial spaces used in Proposition~\ref{prop:det-nonzero}.}

\paragraph{The even Clifford-parity block.}
In \(\mathcal E\cap H_k^{\{y\}}\), set
\[
 u_a=r^as^{k-2a},
 \qquad
 v_a=wr^as^{k-1-2a}.
\]
The vectors \(u_a\) exist for \(0\le a\le\lfloor k/2\rfloor\), and the \(v_a\) for \(0\le a\le\lfloor(k-1)/2\rfloor\). Order the basis as
\[
 (u_0,v_0),(u_1,v_1),\ldots,
\]
with the final unpaired \(u_{k/2}\) appended when \(k\) is even.

\newrev{Put
\[
 b:=k-2a,
 \qquad
 C_{k,a}:=\qnum m+Q\left(\qnum{2a}+q^{2a}\qnum b\right).
\]
For \(u_a=r^as^b\), right multiplication gives \(R_xu_a=xr^as^b\). For \(f=r^as^b\), the monomial rule gives
\[
 \alpha_f=\qnum{2a}r^{a-1}s^b,
 \qquad
 \beta_f=2q^{2a}\qnum b\,r^as^{b-1}.
\]
Substituting these into \eqref{eq:d-x-helper}, and using \((w+s/2)s^{b-1}=ws^{b-1}+s^b/2\), yields
\begin{align*}
 \Kop_k^{\{y\}}u_a
 &=\qnum m u_a+Q\qnum{2a}u_a
   +Qq^{2a}\qnum b\,u_a
   +2Qq^{2a}\qnum b\,v_a\\
 &=C_{k,a}u_a+2Qq^{2a}\qnum b\,v_a.
\end{align*}
Thus
\begin{equation}\label{eq:Ku}
\Kop_k^{\{y\}}u_a
=C_{k,a}u_a+2Qq^{2a}\qnum{k-2a}v_a.
\end{equation}

For \(v_a=wr^as^{b-1}\), the right-multiplication identity \(wx=(s/2)x-ry\) gives
\[
 R_xv_a=x\left(\frac12r^as^b\right)
        -y\left(r^{a+1}s^{b-1}\right).
\]
Set \(f_1=\frac12r^as^b\) and \(f_2=r^{a+1}s^{b-1}\). Their Jackson coefficients are
\[
 \alpha_{f_1}=\frac12\qnum{2a}r^{a-1}s^b,
 \quad
 \beta_{f_1}=q^{2a}\qnum b\,r^as^{b-1},
\]
\[
 \alpha_{f_2}=\qnum{2a+2}r^as^{b-1},
 \quad
 \beta_{f_2}=2q^{2a+2}\qnum{b-1}r^{a+1}s^{b-2}.
\]
Using \eqref{eq:d-x-helper} for \(xf_1\), we obtain
\[
 \partial^{\{y\},\mathrm R}_{x,q}(xf_1)
 =\frac12C_{k,a}u_a+Qq^{2a}\qnum b\,v_a.
\]
Using \eqref{eq:d-y-helper} for \(yf_2\), we obtain
\[
 \partial^{\{y\},\mathrm R}_{x,q}(yf_2)
 =\frac12\qnum{2a+2}u_a
  -\qnum{2a+2}v_a
  +2q^{2a+2}\qnum{b-1}t\,u_{a+1}.
\]
Subtracting the last two displays gives
\begin{align}
\Kop_k^{\{y\}}v_a
&=\frac12\left(C_{k,a}-\qnum{2a+2}\right)u_a
 +\left(\qnum{2a+2}+Qq^{2a}\qnum{k-2a}\right)v_a\notag\\
&\quad-2q^{2a+2}\qnum{k-1-2a}\,t\,u_{a+1}.
\label{eq:Kv}
\end{align}
The last term of \eqref{eq:Kv} belongs to the next pair \((u_{a+1},v_{a+1})\) and is therefore strictly block triangular. With columns representing the input vectors \((u_a,v_a)\) and rows their output coefficients, the diagonal block is
\begin{equation}\label{eq:even-block}
E^{\mathrm R}_{k,a}=
\begin{pmatrix}
C_{k,a}
&\frac12\left(C_{k,a}-\qnum{2a+2}\right)\\[1mm]
2Qq^{2a}\qnum{k-2a}
&\qnum{2a+2}+Qq^{2a}\qnum{k-2a}
\end{pmatrix}.
\end{equation}}

\newrev{A direct calculation, retaining the column/row convention just stated, gives
\[
\begin{aligned}
\det E^{\mathrm R}_{k,a}
&=C_{k,a}\left(\qnum{2a+2}+Qq^{2a}\qnum{k-2a}\right)\\
&\quad-Qq^{2a}\qnum{k-2a}
 \left(C_{k,a}-\qnum{2a+2}\right)\\
&=\qnum{2a+2}
 \left(\qnum m+Q\left(\qnum{2a}+2q^{2a}\qnum{k-2a}\right)\right)\\
&=\qnum{2a+2}A_{k,a}.
\end{aligned}
\]
If \(k\) is even, the unpaired vector \(u_{k/2}=r^{k/2}\) contributes \(\qnum m+Q\qnum{k}\). Hence
\begin{equation}\label{eq:even-det}
\det(\Kop_k^{\{y\}}|_{\mathcal E\cap H_k^{\{y\}}})
=\begin{cases}
\displaystyle
\bigl(\qnum m+Q\qnum{k}\bigr)
\prod_{a=0}^{k/2-1}\qnum{2a+2}A_{k,a},&k\text{ even},\\[2mm]
\displaystyle
\prod_{a=0}^{(k-1)/2}\qnum{2a+2}A_{k,a},&k\text{ odd}.
\end{cases}
\end{equation}}

\paragraph{The odd Clifford-parity block.}
Put
\[
 \xi_a=xr^as^{k-1-2a},
 \qquad
 \upsilon_a=yr^as^{k-2a}.
\]
The vector \(\upsilon_0\) is unpaired; for \(0\le a\le\lfloor(k-2)/2\rfloor\), pair \(\xi_a\) with \(\upsilon_{a+1}\). \newrev{We order this parity block as
\[
 \upsilon_0,\;(\xi_0,\upsilon_1),\;(\xi_1,\upsilon_2),\ldots,
\]
with the final unpaired \(\xi_{(k-1)/2}\) appended when \(k\) is odd.}
This order makes the \(t\xi_{a+1}\) terms below strictly block triangular.

\newrev{For \(\xi_a=xr^as^{k-1-2a}\), associativity gives the particularly simple right product
\[
 R_x(\xi_a)=r^{a+1}s^{k-1-2a}.
\]
If \(b=k-1-2a\), the scalar rule \eqref{eq:d-scalar-helper} has
\[
 \alpha=\qnum{2a+2}r^as^b,
 \qquad
 \beta=2q^{2a+2}\qnum b\,r^{a+1}s^{b-1},
\]
so
\begin{equation}\label{eq:Kxi}
\Kop_k^{\{y\}}\xi_a
=\qnum{2a+2}\xi_a
 +2q^{2a+2}\qnum{k-1-2a}\upsilon_{a+1}.
\end{equation}

For \(\upsilon_a=yr^as^b\) with now \(b=k-2a\), right multiplication gives
\[
 R_x(\upsilon_a)=\left(-w+\frac{s}{2}\right)r^as^b
 =-wr^as^b+\frac12r^as^{b+1}.
\]
For \(f=r^as^b\), use
\[
 \alpha_f=\qnum{2a}r^{a-1}s^b,
 \qquad
 \beta_f=2q^{2a}\qnum b\,r^as^{b-1}.
\]
Equation~\eqref{eq:d-w-helper} gives
\begin{align*}
-\partial^{\{y\},\mathrm R}_{x,q}(wf)
&=-\frac12Qq^{-1}\qnum{2a}\,\xi_{a-1}
  -2Qq^{2a-1}\qnum b\,t\,\xi_a\\
&\quad+
 \left[\qnum{m-1}+Qq^{-1}
 \left(\qnum{2a}+q^{2a}\qnum b\right)\right]\upsilon_a.
\end{align*}
On the other hand, for \(h=\frac12r^as^{b+1}\), equation~\eqref{eq:d-scalar-helper} gives
\[
 \partial^{\{y\},\mathrm R}_{x,q}(h)
 =\frac12\qnum{2a}\,\xi_{a-1}
  +q^{2a}\qnum{b+1}\upsilon_a.
\]
Adding these two contributions yields
\begin{align}
\Kop_k^{\{y\}}\upsilon_a
&=\frac12(1-Qq^{-1})\qnum{2a}\,\xi_{a-1}
 -2Qq^{2a-1}\qnum{k-2a}\,t\,\xi_a\notag\\
&\quad+
 \left[
 \qnum{m-1}
 +Qq^{-1}\left(\qnum{2a}+q^{2a}\qnum{k-2a}\right)
 +q^{2a}\qnum{k-2a+1}
 \right]\upsilon_a.
\label{eq:Kupsilon}
\end{align}
When \eqref{eq:Kupsilon} is applied to the paired input \(\upsilon_{a+1}\), its term involving \(t\xi_{a+1}\) lies beyond the current pair and is strictly block triangular. Therefore, for the ordered pair \((\xi_a,\upsilon_{a+1})\), with columns as inputs and rows as output coefficients, the diagonal block is
\begin{equation}\label{eq:odd-block}
O^{\mathrm R}_{k,a}=
\begin{pmatrix}
\qnum{2a+2}
&\frac12(1-Qq^{-1})\qnum{2a+2}\\[1mm]
2q^{2a+2}\qnum{k-1-2a}
&\Gamma_{k,a}
\end{pmatrix},
\end{equation}
where
\begin{equation}\label{eq:Gamma}
\Gamma_{k,a}
=\qnum{m-1}
 +Qq^{-1}\left(\qnum{2a+2}+q^{2a+2}\qnum{k-2a-2}\right)
 +q^{2a+2}\qnum{k-2a-1}.
\end{equation}}

\newrev{With this column convention, the upper-right entry is the coefficient of \(\xi_a\) in \(\Kop\upsilon_{a+1}\), while the lower-left entry is the coefficient of \(\upsilon_{a+1}\) in \(\Kop\xi_a\). The determinant is
\[
\begin{aligned}
\det O^{\mathrm R}_{k,a}
&=\qnum{2a+2}
 \left(\Gamma_{k,a}-(1-Qq^{-1})q^{2a+2}\qnum{k-1-2a}\right)\\
&=\qnum{2a+2}
 \left[\qnum{m-1}+Qq^{-1}
 \left(\qnum{k}+q^{2a+2}\qnum{k-1-2a}\right)\right]\\
&=\qnum{2a+2}B^{\mathrm R}_{k,a}.
\end{aligned}
\]
In the second line we used the \(q\)-number identities
\[
 \qnum{2a+2}+q^{2a+2}\qnum{k-2a-2}=\qnum{k},
\]
and
\[
 \qnum{2a+2}+q^{2a+2}\qnum{k-1-2a}=\qnum{k+1}.
\]
The unpaired vector \(\upsilon_0=ys^k\) has diagonal coefficient
\[
 C^{\mathrm R}_k=\qnum{m-1}+\qnum{k+1}+Qq^{-1}\qnum{k},
\]
while, when \(k\) is odd, the final unpaired \(\xi_{(k-1)/2}\) contributes \(\qnum{k+1}\). Consequently
\begin{equation}\label{eq:odd-det}
\det(\Kop_k^{\{y\}}|_{\mathcal O\cap H_k^{\{y\}}})
=C^{\mathrm R}_k
\begin{cases}
\displaystyle
\prod_{a=0}^{k/2-1}\qnum{2a+2}B^{\mathrm R}_{k,a},&k\text{ even},\\[2mm]
\displaystyle
\qnum{k+1}\prod_{a=0}^{(k-3)/2}\qnum{2a+2}B^{\mathrm R}_{k,a},&k\text{ odd}.
\end{cases}
\end{equation}}
Multiplying \eqref{eq:even-det} and \eqref{eq:odd-det} gives \eqref{eq:closed-two-vector}.
\end{proof}

\newrev{For \(k=0\), the \(\T_{\{y\}}\)-basis of \(H^{\{y\}}_0\) is \((1,y)\), and the two computations
\[
 \Kop^{\{y\}}_0(1)=\partial^{\{y\},\mathrm R}_{x,q}(x)=\qnum m,
 \qquad
 \Kop^{\{y\}}_0(y)=\bigl(\qnum{m-1}+1\bigr)y
\]
show directly that
\[
 D^{\{y\}}_0=\qnum m\bigl(\qnum{m-1}+1\bigr).
\]
Formula~\eqref{eq:closed-two-vector} gives exactly the same value, because all products are empty and \(C^{\mathrm R}_0=\qnum{m-1}+1\).}

\newrev{After the specialization \(Q=q^M\), the factors in the determinant have definite signs for \(0<q<1\).}

\begin{corollary}[Two-vector nonresonance]\newrev{\label{cor:two-vector-stable}}
\newrev{Assume \(0<q<1\), specialize \(Q=q^M\), and let \(M\ge1\) be an integer. Then for every \(k\ge0\),
\begin{equation}\label{eq:two-vector-unlocalized}
 H^{\{y\}}_{k+1}
 =\ker\!\left(\partial^{\{y\},\mathrm R}_{x,q}:H^{\{y\}}_{k+1}\to H^{\{y\}}_k\right)
 \oplus R_xH^{\{y\}}_k
\end{equation}
holds without determinant localization.}
\end{corollary}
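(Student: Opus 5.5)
The plan is to show that, after \(\sigma_M\) and at a fixed \(q\in(0,1)\), every factor in the closed formula \eqref{eq:closed-two-vector} of Theorem~\ref{thm:two-vector-det} is strictly positive. Then \(\sigma_M(D^{\{y\}}_k)\) is a nonzero real number, and the Fischer subtraction of Theorem~\ref{thm:finite-det-fischer} can be run without localization. For \(0<q<1\) we have \(\qnum n=1+q+\cdots+q^{n-1}>0\) for \(n\ge1\) and \(\qnum 0=0\). After \(Q=q^M\), \(\qnum{m+a}\) becomes \(\qnum{M+a}=(1-q^{M+a})/(1-q)\), which is \(\ge0\) when \(M+a\ge0\). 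Also \(Qq^{-1}=q^{M-1}>0\).

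\textbf{Step 1: positivity of the factors.}
\begin{itemize}[nosep]
\item The factors \(\qnum{k+1}\) and \(\qnum{2a+2}\) are positive.
\item \(\qnum M+q^M\qnum k\ge\qnum M>0\), since \(M\ge1\).
\item \(A_{k,a}=\qnum M+q^M(\qnum{2a}+2q^{2a}\qnum{k-2a})\ge\qnum M>0\).
\item In \(B^{\mathrm R}_{k,a}\) and \(C^{\mathrm R}_k\), the first term \(\qnum{M-1}\) is \(\ge0\), and it vanishes exactly when \(M=1\).
\item For \(B^{\mathrm R}_{k,a}\), the range \(a\le(k-2)/2\) forces \(k\ge2\), so \(q^{M-1}\qnum k>0\).
\item For \(C^{\mathrm R}_k\), the summand \(\qnum{k+1}\ge1\) is positive for every \(k\ge0\).
\end{itemize}
Hence the specialized determinant \(\sigma_M(D^{\{y\}}_k)\in\R\) is a product of positive numbers, so it is nonzero.

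\textbf{Step 2: justifying the specialized matrix.} The matrix of \(\Kop^{\{y\}}_k\) in the basis \eqref{eq:H-basis} has entries in \(\B[t]\); their denominators are only powers of \(q\) and \(1-q\). Specializing \(Q=q^M\) and \(q=q_0\in(0,1)\) is therefore a well-defined ring map \(\B[t]\to\R[t]\), and the determinant commutes with it. The specialized \(H^{\{y\}}_k\) is still free over \(\R[t]\) on the same basis. The specialized \(D_k\) is a nonzero constant, in particular a unit of \(\R[t]\), so \(\Kop_k\) is invertible over \(\R[t]\) itself by the adjugate formula. This holds for every value of \(t\), with no localization at \(t\).

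\textbf{Step 3: the decomposition.} Repeat the proof of Theorem~\ref{thm:finite-det-fischer} verbatim. Set \(U=\Kop_k^{-1}\partial F\) and \(F_0=F-Ux\). This gives \(\partial F_0=0\), so \(F\) lies in the sum of the two pieces. For directness, if \(Ux\) lies in the kernel, then \(\Kop_kU=0\), hence \(U=0\). Lemma~\ref{lem:Rx-injective} guarantees that \(R_x\) stays injective after specialization, so \(R_xH_k\) is an honest copy of \(H_k\). This yields \eqref{eq:two-vector-unlocalized}.

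\textbf{Main obstacle.} The delicate case is \(M=1\), where \(\qnum{M-1}=0\). There the positivity of \(B^{\mathrm R}_{k,a}\) and \(C^{\mathrm R}_k\) rests on the remaining terms, so I will check index ranges carefully to make sure no factor reduces to \(0\). The values \(k=0,1\), where the products are empty, need a separate check, using the direct computation \(D_0=\qnum M(\qnum{M-1}+1)\).
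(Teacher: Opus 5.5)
Your proposal is correct and follows the paper's proof. Both show that after $Q=q^M$ every factor in the closed formula of Theorem~\ref{thm:two-vector-det} is strictly positive for $0<q<1$; this uses $k\ge2$ on the index range of $B^{\mathrm R}_{k,a}$ and $\qnum{k+1}>0$ in $C^{\mathrm R}_k$. The determinant contains no $t$, so it is a unit of $\R[t]$, and the finite Fischer argument then runs without localization. The cases you flag as the main obstacle ($M=1$ and $k=0,1$) are already fully handled by your Step~1, so no separate check is needed.
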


\begin{proof}
\newrev{For \(0<q<1\), every ordinary \(q\)-number \(\qnum n\) with positive integer \(n\) is positive. After \(Q=q^M\),
\[
 \qnum m=\qnum M>0,
 \qquad
 \qnum{m-1}=\qnum{M-1}\ge0,
 \qquad
 Qq^{-1}=q^{M-1}>0.
\]
The factors \(\qnum{2a+2}\), \(\qnum{k+1}\), and \(\qnum M+q^M\qnum{k}\) are therefore positive. Furthermore,
\[
 A_{k,a}=\qnum M+q^M
 \left(\qnum{2a}+2q^{2a}\qnum{k-2a}\right)>0,
\]
and whenever \(B^{\mathrm R}_{k,a}\) occurs,
\[
 B^{\mathrm R}_{k,a}
 =\qnum{M-1}+q^{M-1}
 \left(\qnum{k}+q^{2a+2}\qnum{k-1-2a}\right)>0,
\]
because \(k\ge2\) on that index range. Finally,
\[
 C^{\mathrm R}_k
 =\qnum{M-1}+\qnum{k+1}+q^{M-1}\qnum{k}>0.
\]
Thus every factor in \eqref{eq:closed-two-vector} is strictly positive. Moreover, the closed formula contains no \(t\), so after fixing \(q\) and \(M\) the determinant \(D_k^{\{y\}}\) is a positive real scalar and hence a unit of the specialized coefficient ring \(\R[t]\). The finite Fischer theorem therefore applies without determinant localization.}
\end{proof}

\newrev{Thus, for one auxiliary vector, the homogeneous determinant has no zeros for \(0<q<1\) after any positive integer dimension specialization. Larger supports are treated by the filtration below.}

\subsection{Support factorization}

\newrev{Closed matrices become unwieldy when several auxiliary vectors are present. Instead of attempting a full determinant formula, we filter by the set of auxiliary indices that occur either exteriorly or through a mixed scalar \(s_i\). The key point is that neither right multiplication nor the derivative can create a new such index.}

Let \(Y=\{y_1,\ldots,y_N\}\). For a basis monomial
\(y_Ir^as^\beta\) or \((x\wedge y_I)r^as^\beta\), define its \(x\)-support by
\begin{equation}\label{eq:xsupp}
 \supp_x(I,\beta):=I\cup\{i:\beta_i>0\}.
\end{equation}
The coefficient variables \(c_{ij}\) are not counted. Let \(\mathcal F^J_k(Y)\) be the \(\T_Y\)-span of the degree-\(k\) basis monomials whose \(x\)-support is contained in \(J\).

\newrev{Preservation of the support filtration gives a block-triangular matrix with respect to support inclusion, and hence a factorization into exact-support determinants.}

\begin{proposition}[Support factorization]\label{prop:support-factor}
The operator \(\Kop^Y_k\) preserves every \(\mathcal F^J_k(Y)\). It therefore induces an endomorphism on
\[
 \gr^JH^Y_k
 :=\mathcal F^J_k(Y)\Big/\sum_{J'\subsetneq J}\mathcal F^{J'}_k(Y).
\]
If \(D^Y_{k,J}:=\det(\gr^J\Kop^Y_k)\), then
\begin{equation}\label{eq:support-factor}
 D^Y_k=\prod_{J\subseteq\{1,\ldots,N\}}D^Y_{k,J}.
\end{equation}
Furthermore, \(D^Y_{k,J}\) is obtained by coefficient extension from the exact-support determinant for the smaller radial algebra generated by \(x\) and \(\{y_j:j\in J\}\).
\end{proposition}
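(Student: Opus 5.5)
The plan is to verify that both factors of \(\Kop^Y_k=\dY R_x\) never enlarge the \(x\)-support of a basis monomial. Everything else then follows from block-triangular linear algebra. For \(R_x\) I will read this off Lemma~\ref{lem:Rx-formulas}. In \eqref{eq:Rx-y}, the term \((x\wedge y_I)A\) has the same index set \(I\), and the scalar \(A\) is unchanged. The contraction \(\iota_x(y_I)\) replaces \(y_I\) by \(s_{i_\nu}y_{I\setminus\{i_\nu\}}\), whose support is still contained in \(I\cup\{i:\beta_i>0\}\). In \eqref{eq:Rx-x}, the term \(ry_IA\) has support \(I\), and multiplying by \(r\) changes no \(\beta_i\). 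The term \(x\wedge\iota_x(y_I)\) again only moves an index from \(I\) into \(\beta\).

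For \(\dY\) I will use Definition~\ref{def:finite-derivative} and the monomial formula \eqref{eq:scalar-monomial-action}. The blade-lowering term \((-1)^p\qnum{m-p}y_IB\) keeps the support. The scalar part \(\dscal\) produces \(x\,r^{a-1}s^\beta\), whose support is unchanged. It also produces \(y_is^{\beta-e_i}\) only for indices with \(\beta_i>0\), so the index \(i\) moves from \(\beta\) into the vector factor. I then have to re-express the products \(y_Iy_i\), \(y_Ix\), \((x\wedge y_I)x\) and \((x\wedge y_I)y_i\) in the normal form \eqref{eq:wedge-decomposition}. Each such product expands as exterior products and contractions. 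Exterior products use only the indices already present. A contraction with \(x\) gives \(r\) or \(s_j\) with \(j\) already in \(I\cup\{i\}\). A contraction between two \(y\)'s gives \(c_{j\ell}\), which is not counted in the support. Hence the output support is contained in \(\supp_x(I,\beta)\).

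This bookkeeping over the Clifford-type expansion of \((x\wedge y_I)y_i\) and \(y_Iy_i\) is the step I expect to need the most care. I will handle it by the general vector--blade formula \(v y_I = v\wedge y_I+\iota_v(y_I)\), together with its right-handed analogue as in Lemma~\ref{lem:Rx-formulas}. These show that only the anticommutators \(\{v,y_j\}\) appear, and each of them is either some \(c_{j\ell}\) or some \(s_j\) with \(j\) already in the support.

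Given preservation, I order the subsets \(J\) by any linear extension of inclusion. The basis \eqref{eq:H-basis} is partitioned by exact support, so \(\mathcal F^J_k(Y)\) is free on the monomials with support contained in \(J\), and \(\gr^JH^Y_k\) is free on those with exact support \(J\). The matrix of \(\Kop^Y_k\) in this ordered basis is block triangular, with diagonal blocks \(\gr^J\Kop^Y_k\). Its determinant is therefore the product \eqref{eq:support-factor}.

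For the final claim, observe that a monomial of exact support \(J\) involves only \(x\), the \(y_j\) with \(j\in J\), and coefficients \(c_{ij}\). By the compatibility argument of Theorem~\ref{thm:direct-limit-derivative}, \(\Kop^Y_k\) restricted to these monomials agrees with \(\Kop^{Y_J}_k\) for \(Y_J=\{y_j:j\in J\}\), except for terms of strictly smaller support. The computation above never introduces any \(c_{i\ell}\) with \(i\notin J\) either. The matrix of \(\gr^J\Kop^Y_k\) is therefore the exact-support block for the radial algebra on \(\{x\}\cup Y_J\), with its entries viewed in \(\T_Y\supseteq\T_{Y_J}\). That is exactly the asserted coefficient extension, and the determinant is unchanged by it.
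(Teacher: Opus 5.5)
Your proposal is correct and follows the paper's own proof. You show that neither \(R_x\) (via Lemma~\ref{lem:Rx-formulas}) nor \(\dY\) (via the scalar Jackson terms) enlarges \(x\)-support, order the exact-support blocks compatibly with inclusion to get a block-triangular matrix, and identify each diagonal block by scalar extension from the subalgebra on \(\{x\}\cup\{y_j:j\in J\}\). Your explicit normal-form treatment of \(y_Iy_i\) and \((x\wedge y_I)y_i\) spells out a step the paper leaves implicit, and your hedge ``except for terms of strictly smaller support'' is unnecessary, since direct-limit compatibility gives exact agreement on that subalgebra.
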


\begin{proof}
\newrev{The right-multiplication formulas \eqref{eq:Rx-y}--\eqref{eq:Rx-x} do not increase \(x\)-support. The exterior term \(x\wedge y_I\) retains the same set \(I\); a contraction removes an exterior index \(i\) but inserts the corresponding mixed scalar \(s_i\), so the union defining \(\supp_x\) does not grow. Multiplication by the central variable \(r\) also leaves support unchanged.}

The derivative does not increase support either. The radial \(r\)-difference changes no auxiliary index, while differentiating in \(s_i\) removes one occurrence of \(s_i\) and inserts the vector \(y_i\). Thus \(\Kop^Y_k=\dY R_x\) preserves every \(\mathcal F^J_k(Y)\).

\newrev{Let \(E^J_k(Y)\) be the \(\T_Y\)-span of basis monomials with \(x\)-support exactly \(J\). The normal basis gives the direct sum
\[
 H^Y_k=\bigoplus_{J\subseteq\{1,\ldots,N\}}E^J_k(Y),
 \qquad
 \mathcal F^J_k(Y)=\bigoplus_{J'\subseteq J}E^{J'}_k(Y).
\]
Since every \(\mathcal F^J_k(Y)\) is invariant, an input from \(E^J_k(Y)\) can have output only in \(E^{J'}_k(Y)\) with \(J'\subseteq J\). Order the exact-support summands by nondecreasing cardinality, with any fixed order among supports of the same cardinality. The matrix of \(\Kop^Y_k\) is then block triangular, and its diagonal block on \(E^J_k(Y)\) represents the induced map on \(\gr^JH^Y_k\). Taking determinants proves \eqref{eq:support-factor}.}

On the exact-support quotient for \(J\), only the exterior variables \(y_j\) and the mixed scalars \(s_j\) with \(j\in J\) occur explicitly. Variables outside \(J\) can occur only through coefficients in \(\T_Y\), and all formulas carry those central coefficients along. \newrev{After quotienting by lower supports, contractions can introduce only coefficients \(c_{ij}\) with \(i,j\in J\).} Hence the exact-support operator is the scalar extension of the corresponding operator for the smaller set.
\end{proof}

\newrev{In degree zero, each nonempty exact-support quotient is one-dimensional, so its determinant can be computed directly.}

\begin{proposition}[Degree-zero exact-support factor]\label{prop:degree-zero-factor}
Let \(J\ne\varnothing\) and \(p=|J|\). In degree \(k=0\), the exact-support quotient is one-dimensional and
\newrev{\begin{equation}\label{eq:degree-zero-factor}
 D^Y_{0,J}=\qnum{m-p}+p.
\end{equation}}
For \(J=\varnothing\), \(D^Y_{0,\varnothing}=\qnum m\).
\end{proposition}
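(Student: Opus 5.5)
The plan is a direct computation on the one-dimensional exact-support quotient. First I identify the degree-zero block. By \eqref{eq:H-basis} with \(n=0\), a basis element \(y_Ir^as^\beta\) must have \(2a+|\beta|=0\), so \(a=0\) and \(\beta=0\). No \((x\wedge y_I)\)-element has degree \(0\). Hence \(H^Y_0\) has \(\T_Y\)-basis \(\{y_I\}\), and the \(x\)-support of \(y_I\) is exactly \(I\). So for each \(J\) the exact-support quotient \(\gr^JH^Y_0\) is free of rank one, generated by the class of \(y_J\). Its determinant is therefore the coefficient of \(y_J\) in \(\Kop^Y_0(y_J)\), taken modulo elements of strictly smaller support.

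For \(J=\varnothing\) this is immediate: \(\Kop^Y_0(1)=\dY(x)=\qnum m\), by the case \(p=0\) of \eqref{eq:pure-blade-actions}.

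For \(J\ne\varnothing\) with \(p=|J|\), I apply \eqref{eq:Rx-y} to get
\[
 R_x(y_J)=(-1)^p\bigl((x\wedge y_J)-\iota_x(y_J)\bigr).
\]
I then differentiate the two pieces separately.
\begin{itemize}
\item By \eqref{eq:pure-blade-actions}, \(\dY(x\wedge y_J)=(-1)^p\qnum{m-p}y_J\).
\item For the contraction, \eqref{eq:iota} gives terms \((-1)^{\nu-1}\tfrac12 s_{j_\nu}y_{J\setminus\{j_\nu\}}\). The scalar coefficient is \(s_{j_\nu}/2\), and \(\dscal(s_{j_\nu})=2y_{j_\nu}\). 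By \eqref{eq:derivative-y} and centrality of the coefficient, each term contributes
\[
 (-1)^{\nu-1}\,y_{J\setminus\{j_\nu\}}\,y_{j_\nu}.
\]
\end{itemize}

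Next I rewrite each product \(y_{J\setminus\{j_\nu\}}y_{j_\nu}\) in normal form. It equals the wedge \(y_{J\setminus\{j_\nu\}}\wedge y_{j_\nu}\) plus contraction terms. The contraction terms are \(c\)-coefficients times blades of size \(p-2\), whose support is strictly smaller than \(J\), so they vanish in \(\gr^J\). Moving \(y_{j_\nu}\) back to position \(\nu\) passes it across \(p-\nu\) vectors, so the wedge equals \((-1)^{p-\nu}y_J\). Each \(\nu\) therefore contributes \((-1)^{\nu-1}(-1)^{p-\nu}=(-1)^{p-1}\), and summing over \(\nu=1,\dots,p\) gives
\[
 \dY(\iota_x(y_J))\equiv(-1)^{p-1}p\,y_J.
\]

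Combining the two pieces gives
\[
 \Kop^Y_0(y_J)\equiv(-1)^p\bigl((-1)^p\qnum{m-p}-(-1)^{p-1}p\bigr)y_J=\bigl(\qnum{m-p}+p\bigr)y_J.
\]
This is the claimed factor \eqref{eq:degree-zero-factor}. As a consistency check, \(p=1\) reproduces \(\Kop^{\{y\}}_0(y)=(\qnum{m-1}+1)y\) from Example~\ref{ex:right-pairing}.

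The step I expect to need the most care is the sign bookkeeping for the contraction term, including confirming that its contraction pieces drop into lower support. I will cross-check this step against the explicit \(I=\{1,2\}\) formula in the proof of Lemma~\ref{lem:Rx-formulas}.
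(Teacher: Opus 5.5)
Your proposal is correct and follows essentially the same route as the paper: reduce to the single generator \(y_J\), expand \(R_xy_J\) by \eqref{eq:Rx-y}, differentiate the blade term via \eqref{eq:derivative-x} and each contraction term via \(\dscal(s_{i_\nu}/2)=y_{i_\nu}\), and discard the lower-support contraction pieces. The only difference is in how the signs are grouped. You first total the contraction contribution as \((-1)^{p-1}p\,y_J\) and then multiply by the outer factor \(-(-1)^p\), whereas the paper tracks the combined sign \((-1)^{p+1+\nu-1+p-\nu}=1\) term by term.
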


\begin{proof}
The empty-support statement is \eqref{eq:empty-factor}. Let
\(J=\{i_1<\cdots<i_p\}\ne\varnothing\). The only degree-zero exact-support basis element is \(y_J\). From \eqref{eq:Rx-y},
\newrev{\[
 R_xy_J=(-1)^p\left(
 x\wedge y_J-
 \sum_{\nu=1}^p(-1)^{\nu-1}\frac{s_{i_\nu}}2
 y_{J\setminus\{i_\nu\}}
 \right).
\]}

\newrev{Apply \(\dY\) and work modulo lower-support terms. For the first term, \eqref{eq:derivative-x} gives
\[
 (-1)^p\cdot(-1)^p\qnum{m-p}y_J
 =\qnum{m-p}y_J.
\]
For the \(\nu\)-th contraction term, the coefficient from right multiplication is
\((-1)^{p+1}(-1)^{\nu-1}\). Since
\(\dscal(s_{i_\nu}/2)=y_{i_\nu}\), its derivative is
\[
 (-1)^{p+1}(-1)^{\nu-1}
 y_{J\setminus\{i_\nu\}}y_{i_\nu}.
\]
Modulo lower support, only the exterior product survives. Moving \(y_{i_\nu}\) from the right end to its ordered position contributes \((-1)^{p-\nu}\), so the total sign is
\[
 (-1)^{p+1+\nu-1+p-\nu}=1.
\]
Thus each of the \(p\) contraction terms contributes \(+y_J\) in the exact-support quotient. Summing the blade term and all contractions gives \((\qnum{m-p}+p)y_J\), proving \eqref{eq:degree-zero-factor}.}
\end{proof}

\newrev{The sign cancellation in the preceding proof is the higher-support analogue of the elementary identity
\[
 \partial^{\{y\},\mathrm R}_{x,q}(yx)=\bigl(\qnum{m-1}+1\bigr)y.
\]
The Fischer side is essential here. If the corrected right derivative were instead paired with left multiplication by \(x\), the same exact-support calculation would give
\[
 (-1)^p\bigl(\qnum{m-p}-p\bigr).
\]
For the right pairing \(R_xF=Fx\) used in this paper, the additional signs from moving the final \(x\) through the exterior blade cancel the parity factor and give \(\qnum{m-p}+p\). In particular, the resulting degree-zero factor has no dependence on the parity of \(p\).}

\newrev{After the specialization \(Q=q^M\), the sign of the degree-zero factor can be determined explicitly. It is positive when \(p\le M\) and has one zero in \((0,1)\) when \(p>M\).}

\begin{corollary}[Degree-zero numerical exceptional set]\newrev{\label{cor:degree-zero-resonance}}
\newrev{Let \(M\ge1\) be an integer, specialize \(Q=q^M\), and assume \(0<q<1\). For an exact support of cardinality \(p\ge1\), the degree-zero factor is
\begin{equation}\label{eq:resonance-factor}
 f_{M,p}(q)=\qnum{M-p}+p.
\end{equation}
If \(p\le M\), then \(f_{M,p}(q)>0\) for every \(q\in(0,1)\). If \(p>M\), then \(f_{M,p}\) has exactly one zero \(q_0\in(0,1)\). For the abstract radial algebra with this coefficient specialization, the unlocalized right \(q\)-monogenic Fischer direct sum fails in degree one.}
\end{corollary}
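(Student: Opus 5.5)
The plan is to reduce the formal expression to an explicit function of \(q\), treat the two ranges of \(p\) separately, and then turn a vanishing determinant factor into a nonzero element lying in both Fischer summands.

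\textbf{Step 1: rewrite the factor.} By Proposition~\ref{prop:degree-zero-factor}, the exact-support factor is \([m-p]_q+p\). The specialization \(\sigma_M\) turns this into \(f_{M,p}(q)=(1-q^{M-p})/(1-q)+p\), which is \eqref{eq:resonance-factor}.

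\textbf{Step 2: the case \(p\le M\).} Here \(M-p\ge0\), so \(\qnum{M-p}=1+q+\dots+q^{M-p-1}\ge0\), with the empty sum when \(p=M\). Hence \(f_{M,p}(q)\ge p\ge1>0\) on \((0,1)\).

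\textbf{Step 3: the case \(p>M\).} Put \(n=p-M\ge1\). Since \((1-q^{-n})/(1-q)=-q^{-n}\qnum n\), we get
\[
 f_{M,p}(q)=p-\sum_{j=1}^{n}q^{-j}.
\]
Each \(q^{-j}\) is strictly decreasing on \((0,1)\), so \(f_{M,p}\) is strictly increasing and continuous there. As \(q\to0^+\) it tends to \(-\infty\). As \(q\to1^-\) it tends to \(p-n=M>0\). The intermediate value theorem and strict monotonicity then give exactly one zero \(q_0\in(0,1)\).

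\textbf{Step 4: failure of the direct sum.} Fix \(p>M\) and \(q=q_0\), and choose \(Y\) with \(|Y|=p\) and \(J=Y\).

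\begin{itemize}[nosep]
\item By Proposition~\ref{prop:support-factor}, \(D^Y_0\) is a product of exact-support determinants, one of which is \(f_{M,p}(q_0)=0\). So the specialized determinant \(D^Y_0\) is the zero element of the specialized coefficient ring \(\R[c_{ij}]\).
\item \(H^Y_0\) is a finite free module over this domain. Over its fraction field, \(\Kop^Y_0\) therefore has a nonzero kernel vector, and clearing denominators gives \(0\ne U\in H^Y_0\) with \(\dY(Ux)=\Kop^Y_0U=0\).
\item Alternatively, one can work modulo lower supports and take \(U=y_J\) plus a correction. I expect the cleaner route is the determinant argument above.
\end{itemize}

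By Lemma~\ref{lem:Rx-injective}, which still applies after this specialization, \(Ux\ne0\). Thus \(Ux\) is a nonzero element of \(\ker\dY\cap R_xH^Y_0\), so the sum in degree one is not direct and the unlocalized Fischer decomposition fails. The same element survives in the direct limit by Theorem~\ref{thm:direct-limit-derivative}.

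\textbf{Main obstacle.} The delicate point is justifying that the specialized normal form is still free, so that the kernel argument and Lemma~\ref{lem:Rx-injective} apply at \(q_0\). The monotonicity analysis in Step 3 is routine.
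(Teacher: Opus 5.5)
Your proposal is correct and follows essentially the same route as the paper. You rewrite \(f_{M,p}(q)=p-\sum_{j=1}^{p-M}q^{-j}\) and use monotonicity with the intermediate value theorem to locate the zero. Then the support factorization makes the specialized \(\Kop^Y_0\) singular, you take a nonzero kernel vector \(U\) after clearing denominators, and Lemma~\ref{lem:Rx-injective} makes \(Ux\) a nonzero element lying in both Fischer summands.
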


\begin{proof}
\newrev{If \(p\le M\), then \(M-p\ge0\), so \(\qnum{M-p}\ge0\) and \(f_{M,p}(q)\ge p>0\).}

\newrev{Now suppose \(p>M\) and set \(d=p-M\ge1\). Using \(\qnum{-d}=-q^{-d}\qnum d\),
\[
 f_{M,p}(q)=p-q^{-d}\qnum d
 =p-\sum_{j=1}^dq^{-j}.
\]
The function \(g(q)=\sum_{j=1}^dq^{-j}\) is continuous and strictly decreasing on \((0,1)\), with \(g(q)\to\infty\) as \(q\to0^+\) and \(g(q)\to d=p-M<p\) as \(q\to1^-\). Hence there is a unique \(q_0\in(0,1)\) for which \(g(q_0)=p\), equivalently \(f_{M,p}(q_0)=0\).}

\newrev{By Proposition~\ref{prop:support-factor}, this exact-support factor divides the full degree-zero determinant. At \(q=q_0\), the specialized operator \(\Kop^Y_0\) is singular over the fraction field of the internal scalar coefficients. Choose a nonzero \(U\) in its kernel, clearing central denominators if necessary. Lemma~\ref{lem:Rx-injective} gives \(R_xU=Ux\ne0\), while
\[
 \partial^{Y,\mathrm R}_{x,q_0}(R_xU)=\Kop^Y_0U=0.
\]
Thus the kernel of the derivative has nonzero intersection with \(R_xH^Y_0\), and the direct sum fails.}
\end{proof}

\newrev{The case \(p>M\) lies outside the range in which \(p\) independent exterior directions can be represented faithfully in dimension \(M\).}

\newrev{\begin{remark}[Interpretation of the high-support resonance]
When \(p>M\), a radial algebra on \(p\) auxiliary vectors is not, in general, faithfully represented by \(p\) independent exterior directions in an \(M\)-dimensional Clifford space. Thus the zero in Corollary~\ref{cor:degree-zero-resonance} concerns the abstract radial algebra after the coefficient specialization \(Q=q^M\). In the range \(p\le M\), the degree-zero factor is strictly positive. Higher-degree exact-support determinants may have additional exceptional sets, which are retained in the general theorem through determinant localization.
\end{remark}}

\section*{Conclusion}

We have defined an intrinsic right \(q\)-radial vector derivative on arbitrary radial algebras by means of compatible finite relative operators. \newrev{After \(Q=q^M\), its classical limit agrees in every \(M\)-dimensional Clifford realization with the standard right Clifford derivative.} The anticommutator with exterior creation is triangular and yields an explicit Green decomposition after localization at its diagonal factors.

\newrev{Pairing the derivative with right multiplication \(R_xF=Fx\) gives determinant-localized Fischer decompositions for right \(q\)-monogenic elements, together with explicit homogeneous projectors. The determinants are explicit for zero or one auxiliary vector; in the latter case they are nonzero for every \(0<q<1\) and every positive integer \(M\). For arbitrary finite support, the determinant factors through exact-support quotients, with degree-zero rank-\(p\) factor \([m-p]_q+p\).}

\subsection*{Acknowledgments}
\newrev{The authors sincerely thank the editor and the reviewers for their careful reading of the manuscript and for their many helpful comments and suggestions, which have contributed substantially to improving the paper.}

\subsection*{Funding}
This work was by the University of Ostrava, Grant No.~SGS05/P\v{R}F/2026. \newrev{Yifan Zhang is also co-funded by the Czech Science Foundation (GA\v{C}R), Grant No.~25-16847S, and by the European Union under the REFRESH -- Research Excellence For REgion Sustainability and High-tech Industries project, No.~CZ.10.03.01/00/22003/0000048, via the Operational Programme Just Transition.}

\subsection*{Conflict of interest}
The authors declare that they have no competing interests related to the publication of this paper.

\subsection*{Authors' contributions}
All authors contributed equally to the manuscript and approved the final version.

\bibliographystyle{plainnat}
\bibliography{ref}

@book{BrackxDelangheSommen1982,
  author    = {Brackx, Fred and Delanghe, Richard and Sommen, Frank},
  title     = {Clifford Analysis},
  series    = {Research Notes in Mathematics},
  volume    = {76},
  publisher = {Pitman},
  address   = {Boston},
  year      = {1982}
}

@article{BrackxDeSchepperSoucek2010,
  author  = {Brackx, Fred and De Schepper, Hennie and Sou{\v{c}}ek, Vladim{\'i}r},
  title   = {Fischer Decompositions in Euclidean and Hermitean {Clifford} Analysis},
  journal = {Archivum Mathematicum},
  volume  = {46},
  number  = {5},
  pages   = {301--321},
  year    = {2010},
  url     = {https://dml.cz/handle/10338.dmlcz/141385}
}

@incollection{BernsteinZimmermannSchneider2026,
  author    = {Bernstein, Swanhild and Zimmermann, Martha Lina and Schneider, Baruch},
  title     = {The {$q$}-{Dirac} Operator on {Quantum Euclidean Space}},
  booktitle = {Schur Analysis and Applications to Hypercomplex Analysis, Neural Networks, and Linear Systems},
  editor    = {Alpay, Daniel and Lewkowicz, Izchak and Vajiac, Adrian and Vajiac, Mihaela},
  series    = {Operator Theory: Advances and Applications},
  volume    = {308},
  pages     = {99--118},
  publisher = {Springer},
  year      = {2026},
  doi       = {10.1007/978-3-032-02315-5_4}
}

@article{CoulembierSommen2010,
  author  = {Coulembier, Kevin and Sommen, Frank},
  title   = {{$q$}-Deformed Harmonic and {Clifford} Analysis and the {$q$}-{Hermite} and {Laguerre} Polynomials},
  journal = {Journal of Physics A: Mathematical and Theoretical},
  volume  = {43},
  number  = {11},
  pages   = {115202},
  year    = {2010},
  doi     = {10.1088/1751-8113/43/11/115202}
}

@article{CoulembierSommen2011,
  author  = {Coulembier, Kevin and Sommen, Frank},
  title   = {Operator Identities in {$q$}-Deformed {Clifford} Analysis},
  journal = {Advances in Applied Clifford Algebras},
  volume  = {21},
  number  = {4},
  pages   = {677--696},
  year    = {2011},
  doi     = {10.1007/s00006-011-0281-9}
}

@article{DeSchepperGuzmanAdanSommen2017,
  author  = {De Schepper, Hennie and Guzm{\'a}n Ad{\'a}n, Al{\'i} and Sommen, Frank},
  title   = {The Radial Algebra as an Abstract Framework for Orthogonal and {Hermitian Clifford} Analysis},
  journal = {Complex Analysis and Operator Theory},
  volume  = {11},
  pages   = {1139--1172},
  year    = {2017},
  doi     = {10.1007/s11785-016-0621-9}
}

@phdthesis{GuzmanAdan2018,
  author = {Guzm{\'a}n Ad{\'a}n, Al{\'i}},
  title  = {Euclidean and {Hermitian Clifford} Analysis on Superspace},
  school = {Ghent University},
  year   = {2018}
}

@book{KacCheung2002,
  author    = {Kac, Victor and Cheung, Pokman},
  title     = {Quantum Calculus},
  series    = {Universitext},
  publisher = {Springer},
  address   = {New York},
  year      = {2002},
  doi       = {10.1007/978-1-4613-0071-7}
}

@article{SabadiniStruppaSommenVanLancker2002,
  author  = {Sabadini, Irene and Struppa, Daniele C. and Sommen, Frank and Van Lancker, Peter},
  title   = {Complexes of {Dirac} Operators in {Clifford} Algebras},
  journal = {Mathematische Zeitschrift},
  volume  = {239},
  pages   = {293--320},
  year    = {2002},
  doi     = {10.1007/s002090100297}
}

@article{Sommen1997,
  author  = {Sommen, Frank},
  title   = {An Algebra of Abstract Vector Variables},
  journal = {Portugaliae Mathematica},
  volume  = {54},
  number  = {3},
  pages   = {287--310},
  year    = {1997},
  url     = {https://eudml.org/doc/47853}
}

@article{ZimmermannBernsteinSchneider2025,
  author  = {Zimmermann, Martha Lina and Bernstein, Swanhild and Schneider, Baruch},
  title   = {General Aspects of {Jackson} Calculus in {Clifford} Analysis},
  journal = {Advances in Applied Clifford Algebras},
  volume  = {35},
  pages   = {14},
  year    = {2025},
  doi     = {10.1007/s00006-025-01374-x}
}

\end{document}